\let\le =\leqslant
\let\leq=\leqslant
\let\geq=\geqslant
\def\cinfty{C^\infty} \def\cinftyc{C^\infty _c}
\def\CC{\mathbb C}  \def\DD{\mathbb D}   \def\NN{\mathbb N}  
\def\RR{\mathbb R}     \def\ZZ{\mathbb Z}
\def\C{\mathcal C}   \def\H{\mathcal H}
\def\implies{ \Longrightarrow  } 
\def\ens#1{\{ #1 \}}
\def\Ker{\operatorname{Ker}}
\def\Pf{\operatorname{Pf}}
\def\d{\text{\,\rm d}} 
\def\ob{\operatorname{ob}} 
\def\tildeob{\operatorname{\widetilde{ob}}}
\def\pr{\operatorname{pr}}
\def\supp{\operatorname{supp}}
\def\Pf{\operatorname{Pf}} 
\def\Res{\operatorname{Res}}
\def\zeroset{\{ 0 \}}
\def\eps{\varepsilon}
\def\ph{\varphi} 
\def\rond{{\circ}}
\theoremstyle{plain}
	\newtheorem{Thm}{Theorem}[section]
	\newtheorem{Cor}[Thm]{Corollary}
	\newtheorem{Prop}[Thm]{Proposition}
	\newtheorem{Lem}[Thm]{Lemma}
\theoremstyle{definition}
    \newtheorem{Def}[Thm]{Definition}
    \newtheorem{Rem}[Thm]{Remark}
    \newtheorem{Ex}[Thm]{Example}
\numberwithin{equation}{section}
\begin{document}

\title{Oblique poles of \ $\int_X\vert {f}\vert ^{2\lambda}\vert {g}\vert^{2\mu}\ \square$} 

\author{D. Barlet}
\address{Institut \'Elie Cartan\\
		    Nancy-Universit\'e \\
		    BP 239 \\
		    F-54506 Vandoeuvre-l\`es-Nancy}
\email{Daniel.Barlet@iecn.u-nancy.fr}	
	    
\author{H.-M. Maire}
\address{Universit\'e de Gen\`eve\\
		2-4, rue du Li\`evre \\
		 Case postale 64\\
		    CH-1211 Gen\`eve 4}
\email{henri.maire@unige.ch}

\date{\today}

\subjclass[2000]{32S40,58K55}
\keywords{asymptotic expansion - fibre integrals - meromorphic extension}

\maketitle

\begin{abstract}
Existence of oblique polar lines for the meromorphic extension of the current valued function
$\int |f|^{2\lambda}|g|^{2\mu}\square$ is given under the following hypotheses:
$f$ and $g$ are holomorphic function germs in $\CC^{n+1}$ such that $g$ is non-singular,
the germ $S:=\ens{\d f\wedge \d g =0}$ is one dimensional, and $g|_S$ is proper and finite.
The main tools we use are interaction of strata for $f$ (see \cite{B:91}), monodromy of the local system $H^{n-1}(u)$ on $S$ for a given eigenvalue $\exp(-2i\pi u)$ of the monodromy of $f$,
and the monodromy of the cover $g|_S$.
Two non-trivial examples are completely worked out.
\end{abstract}
%%%%%%%%%%%%%%%%%%%%%%%%%%%%%%%%%%%%%%%%%%%%%%%%
\section*{Introduction}

In the study of a holomorphic function $f$ defined in an open  neighbourhood of $0\in \CC^{n+1}$ with one dimensional critical locus $S$ started in \cite{B:91} and completed in \cite{B:08}, the main tool was to restrict $f$ to hyperplane sections  transverse to $S^*:=S\setminus \ens{0}$  and examine, for a given eigenvalue $\exp(-2i\pi u)$ of the monodromy of $f$, the local system $H^{n-1}(u)$ on $S^*$ formed by the corresponding  spectral  subspaces. Higher order poles of the current valued meromorphic function
$\int |f|^{2\lambda}\square$ at $-u-m$, some $m\in\NN$, are detected by non-extendable sections of $H^{n-1}(u)$ to $S$. 
An important part of this local system remained unexplored in \cite{B:91} and  \cite{B:08} because only the eigenvalue $1$ of the monodromy $\Theta$ of the local system  $H^{n-1}(u)$ has been considered, via the spaces \ $H^0(S^*, H^{n-1}(u))$ \ and \ $H^1(S^*,H^{n-1}(u))$.
 
In this paper, we will focus on the other eigenvalues of $\Theta$. Let us introduce an auxiliary function $t$ with the following properties:
\begin{enumerate}
\item the function  $t$ is non-singular near $0$; 
\item  the set $\Sigma : = \{ \d f \wedge \d t = 0 \} $ is a curve ;\item the restriction $t|_S : S \to \DD $ is proper and finite ; 
\item  $t|_S^{-1}(0) = \{ 0 \}$  and   $t|_{S^*}$ is a finite cover of $\DD^*: = \DD \setminus \{0\}$.
\end{enumerate}
Remark that condition (4) may always be acheived by localization near  $0$  when  conditions  (1), (2) and (3) are satisfied. These conditions are satisfied in a neighbourhood of the origin if  $(f,t)$ forms an isolated complete intersection singularity (icis) with one dimensional critical locus. But we allow also the case where  $\Sigma$  has branches in  $\{f = 0 \}$  not contained in  $S$.

The direct image of the constructible sheaf   $H^{n-1}(u)$   supported in  $S$  by $t$ will be denoted by $\H$; it is a local
system on $\DD^*$. Let $\H_0$ be the fibre of $\H$ at $t_0\in \DD^*$ and $\Theta_0$ its monodromy which is an automorphism of $\H_0$. In case where $S$ is smooth, it is possible to choose the function  $t$  in order that  $t|_S$ is an isomorphism and $\Theta_0$ may be identified with the monodromy   $\Theta$  of  $H^{n-1}(u)$  on  $S^*$. In general, $\Theta_0$ combines $\Theta$ and the monodromy of the cover  $t|_{S^*}$.\\
If  $S =: \cup_{i \in I} S_i$  is the decomposition of  $S$  into irreducible branches, we have an analoguous decomposition on  $S^*$  of the local system  $H^{n-1}(u) = \oplus_i  H^{n-1}(u)^i$  with  $\Theta = \oplus_i \ \Theta^i$. 

\smallskip 
Take an eigenvalue $\exp(-2i\pi l/k)\neq 1$ of $\Theta$, with $l\in [1,k-1]$ and $(l,k)=1$.
We define an anologue of the interaction of strata in this new context. The auxiliary non singular function  $t$  is used to realize analytically the rank one local system on  $S^*$  with  monodromy   $\exp(-2i\pi l/k)$. To perform this we need  the degree of  $t$  at the origin on the irreducible branch   $S_i$  we are interested in to be relatively prime to  $k$. Of course this is the case when  $S$  is smooth and  $t$  transversal to  $S$  at the origin. Using then a  $k-$th root of  $t$  we can lift our situation  to the case where we consider the invariant section of the  complex of  vanishing cycles of the lifted  function $\tilde{f}$ (see Theorem \ref{Thm:interaction}) and then use  already known results from \cite{B:91}.

\bigskip

With the help of elementary properties of meromorphic functions of two variables detailed in paragraph \ref{sect:polar oblique}, we  deduce from interaction of strata above the existence of oblique polar lines for the meromorphic extension of $\int_X |f|^{2\lambda}|t|^{2\mu}\square$. This result is new and consists in a first step toward the comprehension of the polar structure of such an extension.

%%%%%%%%%%%%%%%%%%%%%%%%%%%%%%%%%%%%%%%%%%%%
\section{Polar structure of \ $\int_X\vert f\vert ^{2\lambda}\ \square$} 

\begin{Thm} \label{Thm:Bernstein} 
\textsc{Bernstein \& Gelfand.} For $m$ and $p \in \NN^*$,
let $Y$ be an open subset in $\CC^m$, $f:Y\to \CC^p$  a holomorphic map and
$X$ a relatively compact open set in $Y$.
Then there exists a finite set $P(f) \subset \NN^p$ such that, for any form
$\phi \in\Lambda^{m,m}\cinfty_c(X)$ with compact support, the holomorphic map
in $\ens{\Re \lambda_1>0}\times \dots \times \ens{\Re \lambda_p>0}$ given by
\begin{equation} \label{E:mero}
        (\lambda_1, \dots,\lambda_p) \mapsto\int_X |f_1|^{2\lambda_1} \dots  |f_p|^{2\lambda_p} \phi
\end{equation}
has a meromorphic extension to $\CC^p$ with poles contained in the set
$$ \bigcup_{ a\in P(f), l\in \NN^*}  \{\langle a\mid\lambda\rangle +l=0\}.
$$

\end{Thm}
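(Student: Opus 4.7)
The plan is to reduce to a monomial computation via Hironaka's embedded resolution of singularities of the divisor $\{f_1\cdots f_p=0\}\subset Y$. Compactness of $\overline X$ lets me cover it by finitely many relatively compact open sets $V_\alpha\Subset Y$ and, on each, produce a proper bimeromorphic map $\pi_\alpha:\widetilde V_\alpha\to V_\alpha$, biholomorphic over $\{f_1\cdots f_p\ne 0\}$, such that $\pi_\alpha^{-1}(\{f_1\cdots f_p=0\})$ is a simple normal crossings divisor. A partition of unity subordinate to $\{V_\alpha\}$ splits $\phi$ and reduces the problem to a single $V_\alpha$; a second partition of unity on a finite chart cover of the compact set $\pi_\alpha^{-1}(\supp\phi)$ reduces it further to integrals evaluated in a single coordinate chart $(x_1,\ldots,x_m)$. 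In such a chart the normal crossings hypothesis writes each $\pi_\alpha^\ast f_j$ as a monomial times a unit,
\begin{equation*}
\pi_\alpha^\ast f_j(x) \,=\, u_j(x)\prod_{i=1}^r x_i^{a_{ji}}, \qquad u_j\text{ a unit},\ a_{ji}\in\NN,
\end{equation*}
so that, after change of variables, the integral \eqref{E:mero} becomes a finite sum of pieces of the type
\begin{equation*}
I(\lambda) \,=\, \int_{\CC^m}\prod_{i=1}^r |x_i|^{2\langle a_i\mid\lambda\rangle}\,\Psi(x,\lambda)\,dx\wedge d\bar x,
\end{equation*}
where $a_i:=(a_{1i},\ldots,a_{pi})\in\NN^p$ collects the exponents attached to the $i$-th coordinate and $\Psi$ is a smooth form with compact support in $x$, depending entire-holomorphically on $\lambda$ via the factors $\prod_j|u_j|^{2\lambda_j}$ and the Jacobian of $\pi_\alpha$.

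Next, I would prove meromorphic extension of such monomial integrals by iterated integration by parts. For fixed $i\in\{1,\ldots,r\}$, setting $\alpha_i:=\langle a_i\mid\lambda\rangle$, the identity $|x_i|^{2\alpha_i}=(4(\alpha_i+1)^2)^{-1}\Delta_{x_i}|x_i|^{2(\alpha_i+1)}$ holds in the sense of distributions on $\{\Re\alpha_i>-1\}$, and integration by parts in $x_i$ trades a factor $|x_i|^{2\alpha_i}$ for $|x_i|^{2(\alpha_i+1)}$ at the cost of $(\alpha_i+1)^{-2}$. Iterating $N$ times in each of the $r$ variables separately exhibits $I(\lambda)$ as a meromorphic function on $\bigcap_i\{\Re\langle a_i\mid\lambda\rangle>-N-1\}$ whose polar locus lies in $\bigcup_{i=1}^r\bigcup_{l=1}^N\{\langle a_i\mid\lambda\rangle+l=0\}$; letting $N\to\infty$ yields meromorphic extension to all of $\CC^p$ with polar locus contained in $\bigcup_i\bigcup_{l\in\NN^*}\{\langle a_i\mid\lambda\rangle+l=0\}$. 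Defining $P(f)$ as the (necessarily finite) union of the exponent vectors $a_i$ arising across the finitely many charts and finitely many $V_\alpha$ produces the set required by the statement.

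The main obstacle will be the simultaneous monomialization of all $p$ functions in the same coordinate system; this is precisely what embedded resolution of the single divisor $\{f_1\cdots f_p=0\}$ delivers, since a simple normal crossings divisor automatically forces every holomorphic function vanishing on it to factor as a monomial times a unit in coordinates that cut out its components, with each integer $a_{ji}$ read off as the multiplicity of $\pi_\alpha^\ast f_j$ along the component $\{x_i=0\}$. Once this geometric input is taken as a black box, the analytic content — iterated one-variable integration by parts in each $x_i$, plus the bookkeeping of exponent vectors $a_i$ over the finitely many charts produced by properness of $\pi_\alpha$ and compactness of $\overline X$ — is entirely routine and reads off the candidate polar hyperplanes directly.
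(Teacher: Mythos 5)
Your proposal follows the same route as the paper: Hironaka's resolution of $\{f_1\cdots f_p=0\}$, a finite chart cover obtained from relative compactness and properness, partitions of unity, and iterated one-variable integration by parts on the resulting monomial integrals. The only cosmetic difference is your choice of the Laplacian identity $\Delta_{x_i}|x_i|^{2(\alpha_i+1)}=4(\alpha_i+1)^2|x_i|^{2\alpha_i}$ where the paper uses the first-order identity $(\mu_i+1)|y_i|^{2\mu_i}=\partial_i(|y_i|^{2\mu_i}y_i)$; both exhibit the same candidate polar hyperplanes and yield the same finite set $P(f)$.
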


\begin{proof}
For sake of completeness we recall the arguments of \cite{BG:69}.

Using desingularization of the product $f_1\dots f_p$, we know \cite{Hironaka:64}
that there exists a holomorphic manifold $\tilde Y$ of dimension $m$
and a holomorphic proper map $\pi:\tilde Y \to Y$ such that the composite functions
$\tilde f_j:= f_j\rond \pi$ are locally expressible as
\begin{equation} \label{E:normalcr}
  \tilde f_k(y) = y_1^{a_1^k}\dots y_m^{a_m^k} u_k(y),  1\le k \le p,
\end{equation}
where $a_j^k \in \NN$ and $u_k$ is a holomorphic nowhere vanishing function.
Because $\pi^{-1}(X)$ is relatively compact, it may be covered by a finite number of
open set where (\ref{E:normalcr}) is valid.

For $\ph \in\Lambda^{m,m}\cinfty_c(X)$ and $\Re \lambda_1,\dots, \Re \lambda_p$
positive, we have
$$ \int_X |f_1|^{2\lambda_1} \dots  |f_p|^{2\lambda_p} \phi  =   \int_{\pi^{-1}(X)} |\tilde f_1|^{2\lambda_1} \dots  |\tilde f_p|^{2\lambda_p} \pi^*\phi .
$$
Using  partition of unity and setting $\mu_k:=a_k^1\lambda_1+\dots+a_k^p\lambda_p$, 
$1\le k\le m$,
we are reduced to give a meromorphic extension to
\begin{equation} \label{E:mero1}
          (\mu_1,\dots,\mu_m) \to \int_{\CC^m} |y_1|^{2\mu_1} \dots |y_m|^{2\mu_m}                        \omega(\mu,y)      ,
\end{equation}
where $\omega$ is a $\cinfty $ form of type $(m,m)$ with compact support in $\CC^m$ valued in the space of entire functions on $\CC^m$. Of course, (\ref{E:mero1}) is holomorphic
in $\ens{\Re \mu_1>-1, \dots,\Re \mu_m>-1}$.

The relation
$$ (\mu_1+1).|y_1|^{2\mu_1} = \partial_1( |y_1|^{2\mu_1}.y_1)$$
implies by partial integration in $y_1$
$$  \int_{\CC^m} |y_1|^{2\mu_1} \dots |y_m|^{2\mu_m}\omega(\mu,y)
      = \frac{-1}{\mu_1+1} \int_{\CC^m} 
      |y_1|^{2\mu_1}.y_1. |y_2|^{2\mu_2 }\dots |y_m|^{2\mu_m }\partial_1\omega(\mu,y).
$$
Because    $\partial_1\omega$ is again a $\cinfty $ form of type $(m,m)$ with compact support in $\CC^m$ valued in the space of entire functions on $\CC^m$,
we may repeat this argument for each coordinate $y_2, \dots, y_m$ and obtain
$$ \begin{aligned}
     \int_{\CC^m} |y_1|^{2\mu_1} \dots |y_m|^{2\mu_m}\omega(\mu,y) &=   \\
       \frac{(-1)^m}{(\mu_1+1)\dots(\mu_m+1)} \int_{\CC^m} &
      |y_1|^{2\mu_1}.y_1. |y_2|^{2\mu_2 }.y_2 \dots |y_m|^{2\mu_m}.y_m.\partial_1\dots \partial_m\omega(\mu,y).
      \end{aligned}
$$      
The integral on the right hand side is holomorphic for $\Re \mu_1>-3/2, \dots,\Re \mu_m>-3/2$.
Therefore
the function (\ref{E:mero1}) is meromorphic in this domain with only possible poles in the union of the hyperplanes $\ens{\mu_1+1=0}, \dots, \ens{\mu_m+1=0}$.

Iteration of these arguments concludes the proof.
\end{proof}

\begin{Rem}
An alternate proof of Theorem \ref{Thm:Bernstein}  has been given for $p=1$ by Bernstein \cite{Bern:72}, Bj\"ork \cite{Bj:79}, Barlet-Maire \cite{BM:89}, and by Loeser \cite{Loe:89} and Sabbah \cite{Sab:87} in general.
\end{Rem}

In case where $f_1,\dots , f_p$ define an isolated complete intersection singularity (icis), Loeser and Sabbah gave morover the following information on the set $P(f)$ of the "slopes" of the polar hyperplanes in the meromorphic extension of the function (\ref{E:mero}): it is contained in the set of slopes of the discriminant $\Delta$ of $f$, which in this case is an hypersurface in $\CC^p$.
More precisely, take the $(p-1)-$skeleton of the fan associated to the Newton polyhedron of $\Delta $ at $0$ and denote by $P(\Delta)$ the set of directions associated to this 
$(p-1)-$skeleton union $\ens{(a_1,\dots, a_p)\in \NN^p\mid a_1\dots a_p=0}$. Then
$$ P(f) \subseteq P(\Delta).
$$
In particular, if the discriminant is contained in the hyperplanes of coordinates, then
there are no  polar hyperplanes with direction in $(\NN^*)^p$ .

The results of Loeser and Sabbah above have the following consequence for an icis which  is proved below  directly by elementary arguments, after we have introduced some terminology.

\begin{Def} 
Let  $f_1, \dots, f_p$ be holomorphic functions on an open neighbourhood   $X$  of the origin in  $\mathbb{C}^m$. We shall say that a polar hyperplane  $H \subset \mathbb{C}^p$  for the meromorphic extension of  $\int_X \vert f_1\vert^{2\lambda_1}\dots \vert f_p\vert^{2\lambda_p}\square$ \emph{is supported by the closed set  $F \subset X$}, when $H$  is not a polar hyperplane for the meromorphic extension of  $\int_{X\setminus F} \vert f_1\vert^{2\lambda_1}\dots \vert f_p\vert^{2\lambda_p}\square$.
We shall say that a polar direction is \emph{supported in} $F$ if any  polar hyperplane with this direction is supported by  $F$.
\end{Def}

\begin{Prop}
Assume $f_1,\dots, f_p$ are quasi-homogeneous for the weights  $w_1, \dots, w_p$,  of degree $a_1,\dots, a_p$.  Then if there exists a polar hyperplane direction  supported by the origin  for $(\ref{E:mero})$ in $(\NN^*)^p$ it  is $(a_1,\dots, a_p)$ and the corresponding poles are at most simple.

In particular, for $p=2$, and if  $(f_1, f_2)$  is an icis, all oblique poles have direction $(a_1, a_2)$.
\end{Prop}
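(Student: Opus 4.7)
The key tool is the $\RR_{>0}$-action $\rho_s(x)=(s^{w_1}x_1,\dots,s^{w_m}x_m)$ coming from the quasi-homogeneity (I read the weights as $w_1,\dots,w_m$, one per variable); under it $f_j\circ\rho_s=s^{a_j}f_j$ and the real volume form scales by $s^{2\sum_i w_i}$. A Mellin transform along the orbits of this action will localise origin-supported poles onto a single family of parallel hyperplanes of direction $(a_1,\dots,a_p)$.

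\textbf{First assertion.} Polar hyperplanes supported by $0$ are, by definition, insensitive to replacing $\phi$ by $\chi\phi$ for a cut-off $\chi$ equal to $1$ near $0$ (the complement contributes only non-origin-supported poles), so I may assume $\phi=h\,d\mathrm{vol}$ is supported in $\{N_w\le r\}$, where $N_w$ is a continuous positive function homogeneous of degree $1$ for the action. In weighted polar coordinates $x=\rho_s(y)$, $s\in(0,r]$, $y\in\SS:=\{N_w=1\}$, the integral reads
\[
Z_\phi(\lambda) \;=\; \int_0^r s^{A-1}\,G(s,\lambda)\,ds,\qquad A := 2\sum_j a_j\lambda_j + 2\sum_i w_i,
\]
with $G(s,\lambda)=\int_\SS \prod_j|f_j(y)|^{2\lambda_j}\,h(\rho_s y)\,d\sigma(y)$. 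Taylor-expanding $h(\rho_s y)=\sum_{k=0}^N c_k(y)s^k+s^{N+1}R_N(s,y)$ and integrating in $s$ yields
\[
Z_\phi(\lambda) \;=\; \sum_{k=0}^N \frac{r^{A+k}}{A+k}\,I_k(\lambda) \;+\; (\text{holomorphic for }\Re A>-N-1),
\]
with $I_k(\lambda):=\int_\SS c_k(y)\prod_j|f_j(y)|^{2\lambda_j}\,d\sigma(y)$. Each factor $1/(A+k)$ yields a simple pole on $\{A+k=0\}$, a hyperplane of direction $(a_1,\dots,a_p)$; the other polar hyperplanes, coming from the $I_k(\lambda)$ and from the remainder, reflect only the behaviour of $\prod_j|f_j|^{2\lambda_j}$ on the compact set $\SS\subset X\setminus\{0\}$, and so are not supported by the origin (the same polar hyperplanes are produced by $Z_\psi$ for a test form $\psi$ supported in an annulus $\{\eps\le N_w\le r\}$, $\eps>0$, whose Taylor series at $0$ vanishes and which therefore contributes no $1/(A+k)$ factor). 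This proves the first assertion.

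\textbf{Icis case $p=2$.} It suffices to show that in the icis setting every oblique polar hyperplane is supported by $0$. At $x_0\neq0$ with $f_1(x_0)=f_2(x_0)=0$, the isolated-singularity hypothesis gives $df_1\wedge df_2\neq 0$, so $(f_1,f_2)$ is a local submersion; taking $(y_1,y_2)=(f_1,f_2)$ as partial coordinates the local integrand becomes $|y_1|^{2\lambda_1}|y_2|^{2\lambda_2}$ times a smooth form, whose polar hyperplanes $\{\lambda_j=-l_j\}$ are not oblique. At $x_0\neq 0$ where exactly one $f_j$ vanishes, a Hironaka resolution of that single $f_j$ produces only polar hyperplanes of coordinate direction; elsewhere the integrand is entire in $\lambda$. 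A partition-of-unity argument then shows that no oblique polar hyperplane arises from a test form supported in $X\setminus\{0\}$, hence every oblique polar hyperplane is supported by $0$ and the first assertion forces its direction to be $(a_1,a_2)$.

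\textbf{Principal obstacle.} The most delicate point is making rigorous the parenthetical claim in the first step---that all polar hyperplanes other than the $\{A+k=0\}$ are not supported by the origin. This is where one compares the Mellin decomposition above with the one obtained for a test form supported in a weighted annulus $\{\eps\le N_w\le r\}$: the latter has vanishing Taylor series at $0$ and produces no $1/(A+k)$ factor, yet its radial integral $\int_\eps^r s^{A-1}G(s,\lambda)\,ds$ still exhibits every $\SS$-pole of $G(s,\lambda)$, which are precisely the poles appearing in the $I_k(\lambda)$. This uses essentially that $\SS$ lies in $X\setminus\{0\}$.
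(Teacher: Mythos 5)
Your proof is correct, but it takes a genuinely different computational route from the paper's. The paper contracts the Euler vector field into the volume form, producing $\Omega=\sum(-1)^{j-1}w_jx_j\,\d x_1\wedge\dots\widehat{\d x_j}\dots\wedge\d x_m$ with $\d f_k^{\lambda_k}\wedge\Omega=a_k\lambda_kf_k^{\lambda_k}\d x$, and then applies Stokes to get the functional equation
\[
\bigl(\langle a\mid\lambda\rangle+\langle w\mid\delta+\mathbf1\rangle\bigr)\int|f|^{2\lambda}x^\delta\bar x^\eps\rho\,\d x\wedge\d\bar x=-\int|f|^{2\lambda}x^\delta\bar x^\eps\,\d\rho\wedge\Omega\wedge\d\bar x,
\]
whose right side, $\d\rho$ vanishing near $0$, manifestly contributes no origin-supported poles; dividing by the linear factor then gives at most simple poles of direction $(a_1,\dots,a_p)$ in one step, and the Taylor expansion of the test form finishes. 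Your Mellin/weighted-polar decomposition isolates exactly the same linear factors as the $1/(A+k)$ with $A=2\langle a\mid\lambda\rangle+2\sum w_i$, but it separates radial and angular integrals explicitly, which forces you to argue separately that the angular integrals $I_k(\lambda)$ contribute no origin-supported poles. Your annulus comparison does handle this (take $\psi$ fibred as a radial bump times $c_k$ on $\SS$, so that $Z_\psi=C(\lambda)I_k(\lambda)$ with $C$ entire and nonzero), so the gap you flag is closable, but the paper's Stokes identity side-steps it entirely by making the unwanted poles appear in a single away-from-origin integral. One small imprecision in your write-up: the remainder in your Mellin expansion is not holomorphic for $\Re A>-N-1$ as written, since the angular factor still carries the $\SS$-poles of $\prod|f_j|^{2\lambda_j}$; those, of course, are again not origin-supported, so the conclusion is unaffected. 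Your treatment of the icis case (local submersion at nonzero points of $\{f_1=f_2=0\}$, one-function Hironaka where a single $f_j$ vanishes) is the natural justification of the "in particular," which the paper leaves implicit.
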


\begin{proof}
Quasi-homogeneity gives 
$ f_k(t^{w_1} x_1, \dots, t^{w_m} x_m)=t^{a_k}f_k(x)$, $k=1, \dots, p$.

Let
$\Omega:=\sum _1^m (-1)^{j-1} w_j x_j \d x_0\wedge \dots\wedge\widehat{\d x_j}\wedge \dots\wedge\d x_m$ so that
$\d \Omega= (\sum w_j) \d x$.

>From Euler's relation,because $f_k^{\lambda_k}$ is quasi-homogeneous of degree $a_k\lambda_k$:
$$ \begin{aligned}
     \d f_k^{\lambda_k}\wedge \Omega&= a_k \lambda_ kf_k^{\lambda_k} \d x, \\ 
     \d x^\delta\wedge\Omega& = \langle w\mid\delta\rangle x^\delta\d x, \ \forall \delta\in \NN^m.
     \end{aligned}
$$

Take $\rho\in \C^\infty_c(\CC^{m})$; then, with ${\bf 1}= (1, \dots,1)\in \NN^p$ and $\eps\in \NN^m$:
$$\d (\vert f\vert^{2\lambda} x^\delta\bar x^\eps\rho\Omega\wedge\d\bar x)= (\langle a\mid \lambda\rangle +\langle w\mid \delta+ {\bf 1}\rangle )\vert f\vert^{2\lambda} x^\delta\bar x^\eps\rho\d x\wedge\d\bar x +\vert f\vert^{2\lambda} x^\delta \bar x^\eps\d\rho\wedge\Omega\wedge\d\bar x.
$$

Using Stokes' formula we get
$$  (\langle a\mid \lambda\rangle +\langle w\mid \delta+ {\bf 1}\rangle ) 
      \int |f|^{2\lambda} x^\delta\bar x^\eps\rho\d x \wedge\d \bar x = -\int |f|^{2\lambda} x^\delta\bar x^\eps\d\rho\wedge\Omega
      \wedge\d \bar x.
$$   

For $\rho=1 $ near $0$, $\d\rho=0$, near $0$. Therefore the right hand side has no poles supported by the origin. Now the conclusion comes from the Taylor expansion at  $0$  of the test function. 
\end{proof}

The question of whether a polar hyperplane is effectively present in the meromorphic
extension of the function (\ref{E:mero}) for a least one $\phi$ has been addressed in case $p=1$ under the name "contribution effective" in a sequence of papers by D. Barlet \cite{B:84a},  \cite{B:84b}, \cite{B:86} etc ...

For $p>1$ no general geometric conditions are known to produce poles with direction in 
$(\NN^*)^p$. In the following paragraphs, we examine the case $p=2$.

%%%%%%%%%%%%%%%%%%%%%%%%%%%%%%%%%%%%%%%%%%%%%%
\section{Existence of polar oblique lines} \label{sect:polar oblique}

In this paragraph, we consider two holomorphic functions $f, g:Y\to \CC$, where $Y$ is an open subset in $\CC^m$ and fix a relatively compact open subset $X$ of $Y$. Without loss of generality, we assume $0\in X$. We study the possible oblique poles of the meromorphic extension of the current valued function
\begin{equation} \label{E:mero2}
        (\lambda, \mu) \mapsto\int_X |f|^{2\lambda} |g|^{2\mu} \square.
\end{equation}

The following elementary lemma is basic.

\begin{Lem} Let $M$ be a meromorphic function in $\CC^2$ with poles along a family of lines with directions in $\NN^2$. For $(\lambda_0,\mu_0)\in \CC^2$, assume
\begin{itemize}
\item[(i)] $\ens{\lambda=\lambda_0}$ is a polar line of order $\le k_0$ of $M$,
\item[(ii)] $\ens{\mu=\mu_0}$ is not a polar line of $M$,
\item[(iii)] $\lambda \mapsto M(\lambda,\mu_0)$ has a pole of order at least  $k_0+1$ at $\lambda_0$.
\end{itemize}
Then there exists $(a,b)\in (\NN^*)^2$ such that the function $M$ has a pole along the 
(oblique) line $\ens{a\lambda+b\mu= a\lambda_0+b\mu_0}$.
\end{Lem}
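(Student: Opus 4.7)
The plan is a localisation argument followed by a contradiction. I work in a small bidisc $U$ centred at $(\lambda_0, \mu_0)$. Since the polar set of $M$ is a locally finite union of complex lines whose directions lie in $\NN^2$, any polar line passing through $(\lambda_0, \mu_0)$ is of exactly one of three types: the vertical $\ens{\lambda=\lambda_0}$, the horizontal $\ens{\mu=\mu_0}$, or an oblique line $\ens{a\lambda+b\mu=a\lambda_0+b\mu_0}$ with $(a,b)\in(\NN^*)^2$. After shrinking $U$, every polar line of $M$ meeting $U$ already passes through $(\lambda_0, \mu_0)$, so these are the only candidates.

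Next I would rule out the unwanted types. Hypothesis (ii) directly eliminates the horizontal line $\ens{\mu=\mu_0}$. Suppose, for contradiction, that no oblique polar line through $(\lambda_0, \mu_0)$ exists. Then the only polar line of $M$ inside $U$ is $\ens{\lambda=\lambda_0}$, and by (i) it has order at most $k_0$. Consequently the function
$$ N(\lambda, \mu) := (\lambda-\lambda_0)^{k_0} M(\lambda, \mu) $$
is holomorphic on $U$.

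To finish, I would restrict $N$ to the transversal $\ens{\mu=\mu_0}$. This restriction is legitimate because (ii) guarantees that $\lambda \mapsto M(\lambda,\mu_0)$ is a well-defined meromorphic function near $\lambda_0$, and $\lambda \mapsto N(\lambda,\mu_0)$ must be holomorphic near $\lambda_0$. But (iii) forces $\lambda \mapsto M(\lambda,\mu_0)$ to have a pole of order at least $k_0+1$ at $\lambda_0$, so $\lambda \mapsto (\lambda-\lambda_0)^{k_0} M(\lambda,\mu_0)$ still has a pole of order at least $1$ there, a contradiction. Hence some oblique polar line through $(\lambda_0, \mu_0)$ must exist, and by construction it has the form $\ens{a\lambda+b\mu=a\lambda_0+b\mu_0}$ with $(a,b)\in(\NN^*)^2$.

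The only step requiring slight care is the claim that $N$ extends as a holomorphic function across the \emph{entire} line $\ens{\lambda=\lambda_0}\cap U$, rather than merely on $U\setminus\ens{\lambda=\lambda_0}$. This follows from the definition of order of pole along an irreducible component (so $(\lambda-\lambda_0)^{k_0}M$ is holomorphic at a generic point of the line) combined with Hartogs-type extension across the smooth codimension-one set $\ens{\lambda=\lambda_0}$ once $U$ has been chosen so that no other polar component is present. Everything else is routine bookkeeping on the three possible directions of the polar lines through $(\lambda_0, \mu_0)$.
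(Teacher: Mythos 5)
Your argument is the same as the paper's: assume for contradiction that no oblique polar line passes through $(\lambda_0,\mu_0)$, deduce from (i) and (ii) that $(\lambda-\lambda_0)^{k_0}M$ is holomorphic near that point, restrict to $\mu=\mu_0$, and contradict (iii). The paper states this in three lines; you merely spell out the routine localisation and the Hartogs-type removability step that the paper leaves implicit, so the two proofs coincide in substance.
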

\begin{proof}
If $M$ does not have an oblique pole through $(\lambda_0,\mu_0)$, then the function           $(\lambda,\mu) \mapsto (\lambda-\lambda_0)^{k_0}M(\lambda,\mu)$ is holomorphic near $(\lambda_0,\mu_0)$.
Therefore, $\lambda \mapsto M(\lambda,\mu_0)$ has at most a pole of order $k_0$ at $\lambda_0$. Contradiction.
\end{proof}

It turns out that 
to check the first condition in the above lemma for the function (\ref{E:mero2}),  it is sufficient to examine the poles of the meromorphic extension of the current valued function
\begin{equation} \label{E:mero3}
        \lambda \mapsto\int_X |f|^{2\lambda}  \square.
\end{equation}
Such a simplification does not hold for general meromorphic functions. For example,
$$ (\lambda,\mu) \mapsto \frac{\lambda +\mu}{\lambda^2}
$$
has a double pole along $\ens{\lambda=0}$ but its restricition to $\ens{\mu=0}$ has only a simple pole at $0$.

\begin{Prop} \label{Prop:supportpole}
If the meromorphic extension of the current valued function $(\ref{E:mero3})$ has a pole of order $k$ at $\lambda_0\in\RR_-$, \emph{i.e.}, it has a principal part of the form
$$  \frac{T_k}{(\lambda-\lambda_0)^k} +\dots+ \frac{T_1}{\lambda-\lambda_0},
$$
at $\lambda_0$, then the  meromorphic extension of the function
$( \ref{E:mero2})$ has a pole of order 
\begin{equation} \label{E:orderexact}
    k_0:=\max\ens{0\le l\le k\mid  \supp T_l\not\subseteq \ens{g=0}} 
\end{equation}
along the line $\ens{\lambda=\lambda_0}$. 
\end{Prop}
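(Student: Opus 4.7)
The plan is to Laurent-expand the two-variable integral in $\lambda$ along $\ens{\lambda=\lambda_0}$, interpret each coefficient of the expansion (viewed as a meromorphic function of $\mu$) as the pairing $\langle T_l,\vert g\vert^{2\mu}\phi\rangle$, and then read off the pole order from two elementary properties of this pairing: it vanishes identically in $\mu$ when $\supp T_l\subseteq\ens{g=0}$, and it can be made non-vanishing when this inclusion fails.

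Fix $\phi\in\Lambda^{m,m}\cinftyc(X)$ and set $G(\lambda,\mu;\phi):=\int_X \vert f\vert^{2\lambda}\vert g\vert^{2\mu}\phi$. By Theorem \ref{Thm:Bernstein}, $G$ is meromorphic on $\CC^2$ with polar lines of directions in $\NN^2$, so it admits a finite Laurent expansion along $\ens{\lambda=\lambda_0}$
$$G(\lambda,\mu;\phi)=\sum_{l\geq 1}\frac{B_l(\mu;\phi)}{(\lambda-\lambda_0)^l}+R(\lambda,\mu;\phi)$$
with $R$ holomorphic near $\ens{\lambda=\lambda_0}$ and each $B_l(\,\cdot\,;\phi)$ meromorphic on $\CC$, satisfying $B_l(0;\phi)=\langle T_l,\phi\rangle$. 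The current-valued pole order of $G$ along $\ens{\lambda=\lambda_0}$ equals $\sup_\phi\max\ens{l\mid B_l(\,\cdot\,;\phi)\not\equiv 0}$. Inspection of the Bernstein--Gelfand proof shows $T_l$ is a distribution of finite order; for $\Re\mu$ large enough that $\vert g\vert^{2\mu}\phi$ is of class $C^N$ with $N$ larger than that order, the residue representation
$$B_l(\mu;\phi)=\frac{1}{2i\pi}\oint_{\vert\lambda-\lambda_0\vert=\eps}(\lambda-\lambda_0)^{l-1}G(\lambda,\mu;\phi)\,\d\lambda$$
coincides with $\langle T_l,\vert g\vert^{2\mu}\phi\rangle$ on that range, and the equality propagates to all of $\CC$ by meromorphic continuation in $\mu$.

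For the upper bound, assume $l>k_0$, so $\supp T_l\subseteq\ens{g=0}$. Pulling back to a resolution $\pi$ of $g$ where $g\circ\pi$ is a local monomial $y^a u(y)$, one checks that $\vert g\circ\pi\vert^{2\mu}$ vanishes together with arbitrarily many derivatives on $\ens{g\circ\pi=0}$ for $\Re\mu$ sufficiently large, so $\vert g\vert^{2\mu}\phi$ annihilates $T_l$ and $B_l(\,\cdot\,;\phi)\equiv 0$ for every $\phi$. For the lower bound, the hypothesis $\supp T_{k_0}\not\subseteq\ens{g=0}$ lets one pick $\phi$ supported in a small open set disjoint from $\ens{g=0}$ with $\langle T_{k_0},\phi\rangle\neq 0$; on $\supp\phi$ the factor $\vert g\vert^{2\mu}$ is jointly smooth in its variables, so $B_{k_0}(\mu;\phi)=\langle T_{k_0},\vert g\vert^{2\mu}\phi\rangle$ is entire in $\mu$ and non-zero at $\mu=0$, giving $B_{k_0}(\,\cdot\,;\phi)\not\equiv 0$. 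Combining the two bounds yields the exact pole order $k_0$ along $\ens{\lambda=\lambda_0}$.

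The main technical step is the annihilation in the upper bound: showing that $T_l$, supported in $\ens{g=0}$, pairs trivially with $\vert g\vert^{2\mu}\phi$ for $\Re\mu\gg 0$. The possible obstruction comes from singular points of $\ens{g=0}$, but pulling back by a resolution $\pi$ and using that the pull-back of $T_l$ remains of finite order and supported in a normal-crossings divisor reduces the question to the transparent monomial case $\vert y_1\vert^{2a_1\mu}\cdots\vert y_n\vert^{2a_n\mu}$, where the vanishing is immediate.
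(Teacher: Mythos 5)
Your proof follows the same route as the paper: Laurent--expand in $\lambda$, identify the coefficients with $\langle T_l, |g|^{2\mu}\phi\rangle$ for $\Re\mu$ large, bound the pole order from above by showing these pairings vanish when $\supp T_l\subseteq\{g=0\}$, and bound it from below by testing with a form supported away from $\{g=0\}$. The lower bound is word-for-word the paper's argument. The identification of $B_l(\mu;\phi)$ with $\langle T_l,|g|^{2\mu}\phi\rangle$ for $\Re\mu$ large, and its extension by meromorphic continuation, is also the same mechanism, though you make the contour-integral step explicit.

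The one place you diverge is the upper bound, and there you introduce an unnecessary and slightly misstated step. You pass to a resolution of $g$ and speak of ``the pull-back of $T_l$''. There is no canonical pull-back of a distribution by a proper modification $\pi$; what actually exists is the Laurent coefficient $\tilde T_l$ of $\int_{\tilde X}|\tilde f|^{2\lambda}\square$ on the resolution, which satisfies $\pi_*\tilde T_l=T_l$ and is supported in $\pi^{-1}\{g=0\}$ because $\pi$ is an isomorphism off the exceptional divisor and the exceptional divisor lies in $\pi^{-1}\{g=0\}$. One could carry out your monomial-case argument with $\tilde T_l$ in place of ``$\pi^*T_l$'', but none of this is needed: the paper's upper bound is elementary. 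Since $T_l$ has finite order $N$ and is supported in $\{g=0\}$, and since $|g|^{2\mu}=\big(g\bar g\big)^\mu$ vanishes together with all its partial derivatives of order $\le N$ on $\{g=0\}$ once $\Re\mu$ is large enough (each differentiation of $(g\bar g)^\mu$ lowers the exponent by at most one and the chain-rule factors $D(g\bar g)=\bar g\,Dg+g\,D\bar g$ themselves vanish on $\{g=0\}$), the pairing $\langle T_l,|g|^{2\mu}\phi\rangle$ vanishes for such $\mu$ and hence identically. This direct estimate needs no resolution and no normal-crossings reduction.
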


\begin{proof}
As a consequence of the Bernstein identity (see \cite{Bj:79}), there exists $N\in\NN$ such that the extension of $\int_X |f|^{2\lambda}\phi$ in $\ens{\Re\lambda > \lambda_0-1}$ can be achieved for $\phi\in\Lambda^{m,m}\C^N_c(X)$. Our hypothesis implies that this function has  a pole of order $\le k$ at $\lambda_0$. Because $|g|^{2\mu} \phi$ is of class $\C^N$ for $\Re \mu$ large enough, the function
$$\lambda \mapsto \int_X |f|^{2\lambda}|g|^{2\mu} \phi
$$
has a meromorphic extension in $\ens{\Re\lambda > \lambda_0-1}$ with a pole of order $\le k$ at $\lambda_0$. We have proved that $( \ref{E:mero2})$ has a pole of order $\le k$
along the line $\ens{\lambda=\lambda_0}$.

Near $\lambda_0$, the extension of $\int_X |f|^{2\lambda} \phi$ writes
$$  \frac{\langle T_k,\phi\rangle}{(\lambda-\lambda_0)^k} +\dots+ \frac{\langle T_1,\phi\rangle}{\lambda-\lambda_0} + \dots .
$$
Hence that of $\int_X |f|^{2\lambda} |g|^{2\mu}\phi$ looks
$$  \frac{\langle T_k|g|^{2\mu},\phi\rangle}{(\lambda-\lambda_0)^k} +\dots+ 
\frac{\langle T_1|g|^{2\mu},\phi\rangle}{\lambda-\lambda_0} + \dots .
$$
If $\supp T_k\subseteq \ens{g=0}$, then the first term vanishes for $\Re \mu$ large enough, because $T_k$ is of finite order (see the beginning of the proof).
So the order of the pole along the line 
$\ens{\lambda=\lambda_0}$ is $\le k_0$.

Take $x_0\in \supp T_{k_0}$ such that $g(x_0)\ne 0$ and $V$ a neighborhood of $x_0$ in which $g$ does not vanish. From the definition of the support, there exists $\psi\in\Lambda^{m,m}\cinftyc(V)$ such that $\langle T_{k_0},\psi\rangle \ne 0$. With 
$\phi:=\psi|g|^{-2\mu}\in\Lambda^{m,m}\cinftyc(V)$, we get
$$ \langle T_{k_0}|g|^{2\mu},\phi\rangle = \langle T_{k_0},\psi\rangle \ne 0.
$$
Therefore, the extension of $( \ref{E:mero2})$ has a pole of order $k_0$ along the line 
$\ens{\lambda=\lambda_0}$.
\end{proof}

\begin{Cor}  \label{Cor:oblic2}
For $(\lambda_0,\mu_0)\in (\RR_-)^2$, assume
\begin{itemize}
\item[(i)] the  extension of the current valued function $(\ref{E:mero3})$ has a pole of order $k$ at $\lambda_0$,
\item[(ii)] $\mu_0$ is not an integer translate of a root of the Bernstein polynomial of $g$,
\item[(iii)] $\lambda \mapsto \Pf(\mu=\mu_0,\int_X |f|^{2\lambda} |g|^{2\mu} \square)$ has a pole of order
$l_0>k_0$ where $k_0$ is defined in $(\ref{E:orderexact})$ at $\lambda_0$.
\end{itemize}
Then the meromorphic extension of the current valued function $(\ref{E:mero2})$
has at least $l_0 - k_0$ oblique lines, counted with multiplicities, through $(\lambda_0,\mu_0)$.
\begin{center}

\includegraphics[scale=.4]{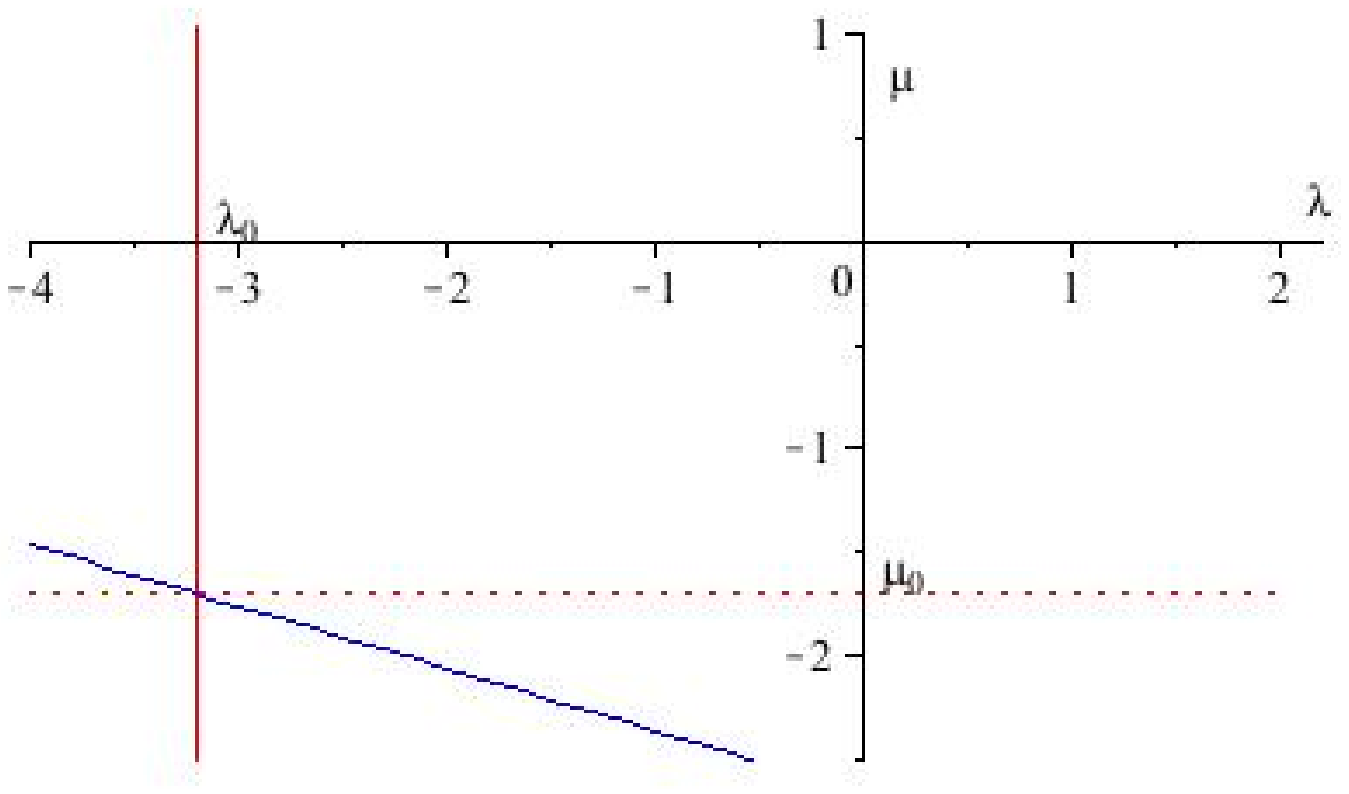}
\end{center}
\end{Cor}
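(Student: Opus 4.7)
The plan is to combine three ingredients: Proposition \ref{Prop:supportpole}, which pins down the polar order of the vertical line $\{\lambda=\lambda_0\}$ as exactly $k_0$; a Bernstein--Sato argument for $g$ to exclude the horizontal line $\{\mu=\mu_0\}$ from the polar locus of $M(\lambda,\mu):=\int_X|f|^{2\lambda}|g|^{2\mu}\square$; and the fact (Theorem \ref{Thm:Bernstein}) that every polar direction lies in $\NN^2$. This will reduce the local polar locus near $(\lambda_0,\mu_0)$ to $\{\lambda=\lambda_0\}$ plus oblique lines, whose total multiplicity can then be bounded below by $l_0-k_0$ using assumption (iii).

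First I would apply Proposition \ref{Prop:supportpole} to assumption (i), obtaining that $\{\lambda=\lambda_0\}$ is a polar line of $M$ of order exactly $k_0$, where $k_0$ is defined by $(\ref{E:orderexact})$. Next, I would argue that $\{\mu=\mu_0\}$ is \emph{not} a polar line of $M$: a polar line is an affine line in $\CC^2$ and so meets $\{\Re\lambda>R\}$ for every $R$; but on any slice $\lambda=\lambda_1$ with $\Re\lambda_1$ large enough that $|f|^{2\lambda_1}\phi$ sits in $\Lambda^{m,m}\C^N_c(X)$ with $N$ sufficiently large, the Bernstein identity for $g$ implies that $\mu\mapsto\int|f|^{2\lambda_1}|g|^{2\mu}\phi$ has poles only at integer translates of roots of $b_g$, and $\mu_0$ is not one of them by assumption (ii). Invoking Theorem \ref{Thm:Bernstein}, the polar lines of $M$ through $(\lambda_0,\mu_0)$ are therefore $\{\lambda=\lambda_0\}$ together with oblique lines $L_j=\{a_j(\lambda-\lambda_0)+b_j(\mu-\mu_0)=0\}$ with $(a_j,b_j)\in(\NN^*)^2$, of some multiplicities $m_j$.

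With this local structure at hand, the counting step is direct. The product
$$ (\lambda-\lambda_0)^{k_0}\prod_j\bigl(a_j(\lambda-\lambda_0)+b_j(\mu-\mu_0)\bigr)^{m_j}\,M(\lambda,\mu) $$
is holomorphic near $(\lambda_0,\mu_0)$. Specializing $\mu=\mu_0$, each oblique factor becomes $a_j(\lambda-\lambda_0)$ with $a_j\neq 0$, so $(\lambda-\lambda_0)^{k_0+\sum_j m_j}\,M(\lambda,\mu_0)$ extends holomorphically near $\lambda_0$; in particular $M(\lambda,\mu_0)$ has pole order at most $k_0+\sum_j m_j$ at $\lambda_0$. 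Since $\{\mu=\mu_0\}$ is not polar, $M(\lambda,\mu_0)$ agrees near $\lambda_0$ with $\lambda\mapsto\Pf(\mu=\mu_0,M(\lambda,\mu))$, so assumption (iii) forces $l_0\le k_0+\sum_j m_j$, giving $\sum_j m_j\ge l_0-k_0$, as claimed.

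The subtlety I expect to be most delicate is the passage from ``no horizontal polar line for large $\Re\lambda$'' to ``no horizontal polar line near $(\lambda_0,\mu_0)$'': this uses that a polar line of $M$ is a globally defined line in $\CC^2$, so the direction $\{\mu=\mu_0\}$ is either polar everywhere or not at all, and the former is ruled out by the Bernstein equation for $g$ combined with (ii). Everything else is a bookkeeping of pole orders that fits cleanly once this dichotomy is in place.
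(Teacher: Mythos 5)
Your argument is correct and matches the route the paper clearly intends: Corollary \ref{Cor:oblic2} is stated without its own proof and is meant to follow by combining the preceding Lemma, Proposition \ref{Prop:supportpole}, and a Bernstein--Sato argument for $g$ that rules out the horizontal line, exactly as you do. Your multiplicity bookkeeping (clearing the vertical pole of order $\le k_0$ and each oblique factor, then specializing $\mu=\mu_0$ to get $l_0\le k_0+\sum_j m_j$) is the natural quantitative upgrade of the Lemma's proof, which handles only the case of a single oblique line; the only point worth flagging is that for a fixed test form the vertical order may be strictly less than $k_0$, but since the argument only uses the upper bound $\le k_0$ supplied by Proposition \ref{Prop:supportpole}, that does no harm.
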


\begin{Ex}
$m=3$,  $f(x,y,z)=x^2+y^2+z^2$, $g(x,y,z) = z$.
\begin{center}
\includegraphics[scale=.4]{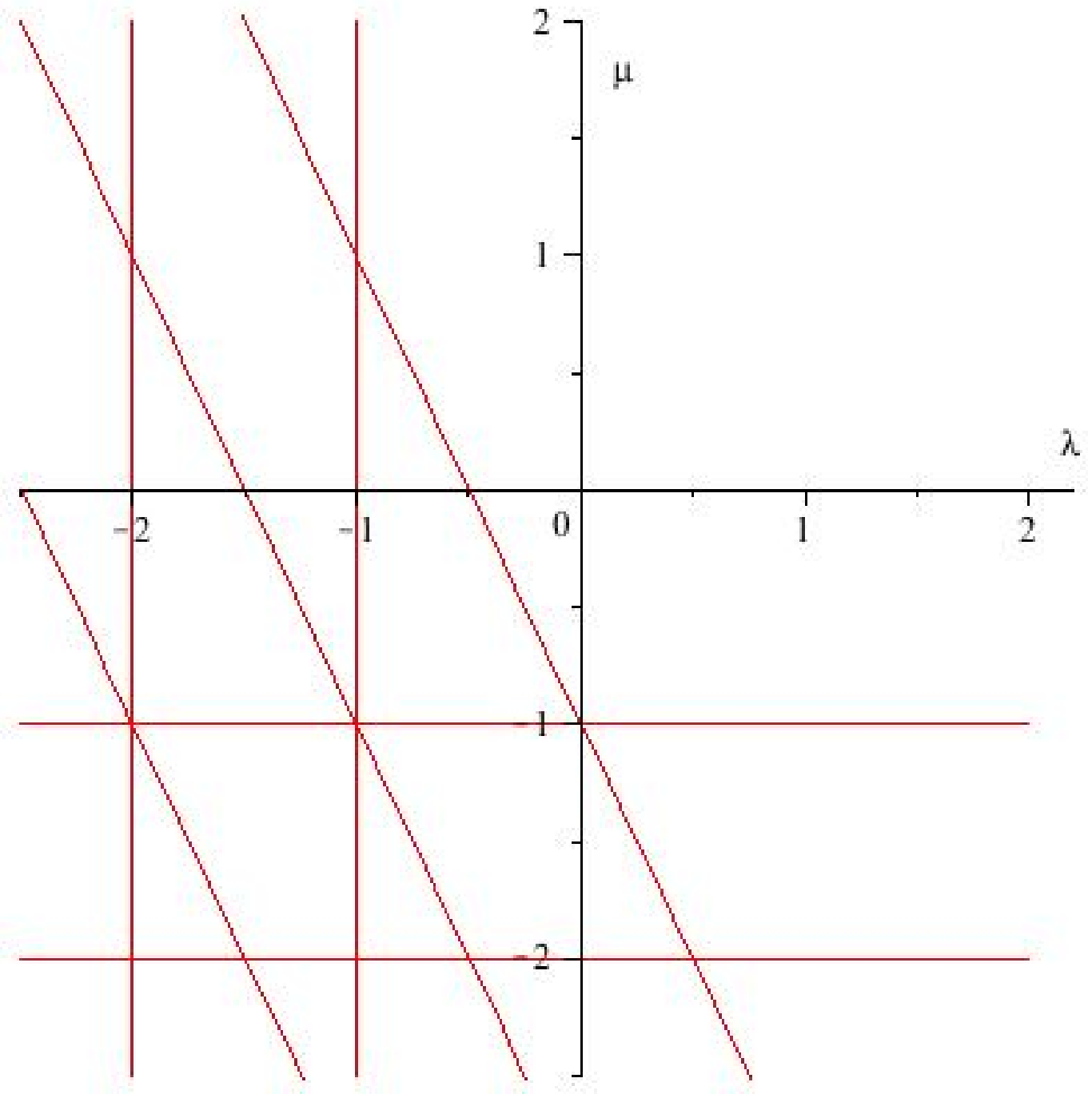}
\end{center}
\end{Ex}

\begin{Ex}
$m=4$,  $f(x,y,z)=x^2+y^2+z^2+t^2$, $g(x,y,z,t) = t^2$.
\begin{center}
\includegraphics[scale=.4]{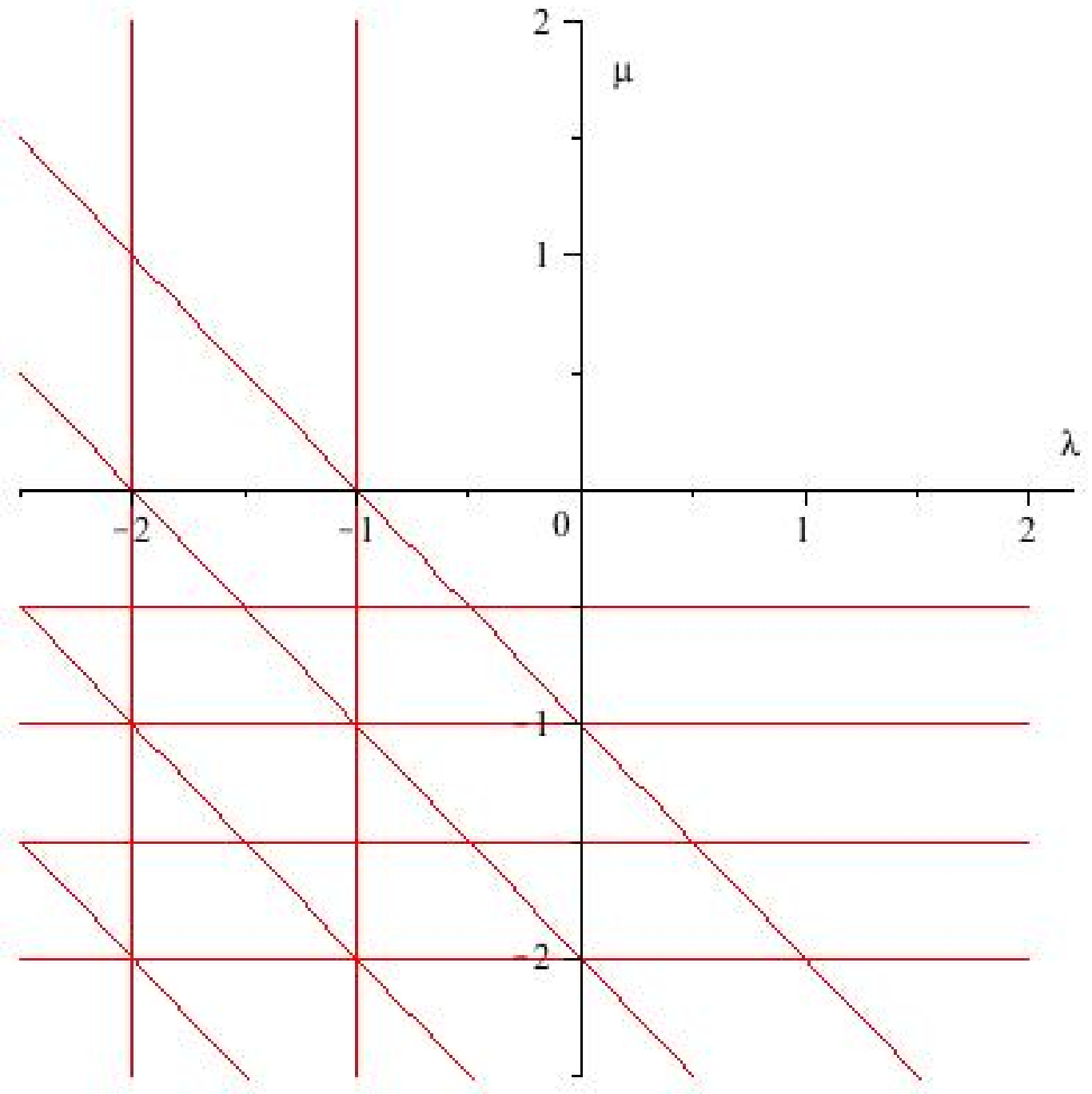}
\end{center}
\end{Ex}

\begin{Ex}
$m=3$,  $f(x,y,z)=x^2+y^2$, $g(x,y,z)=y^2+z^2$.
\begin{center}
\includegraphics[scale=.4]{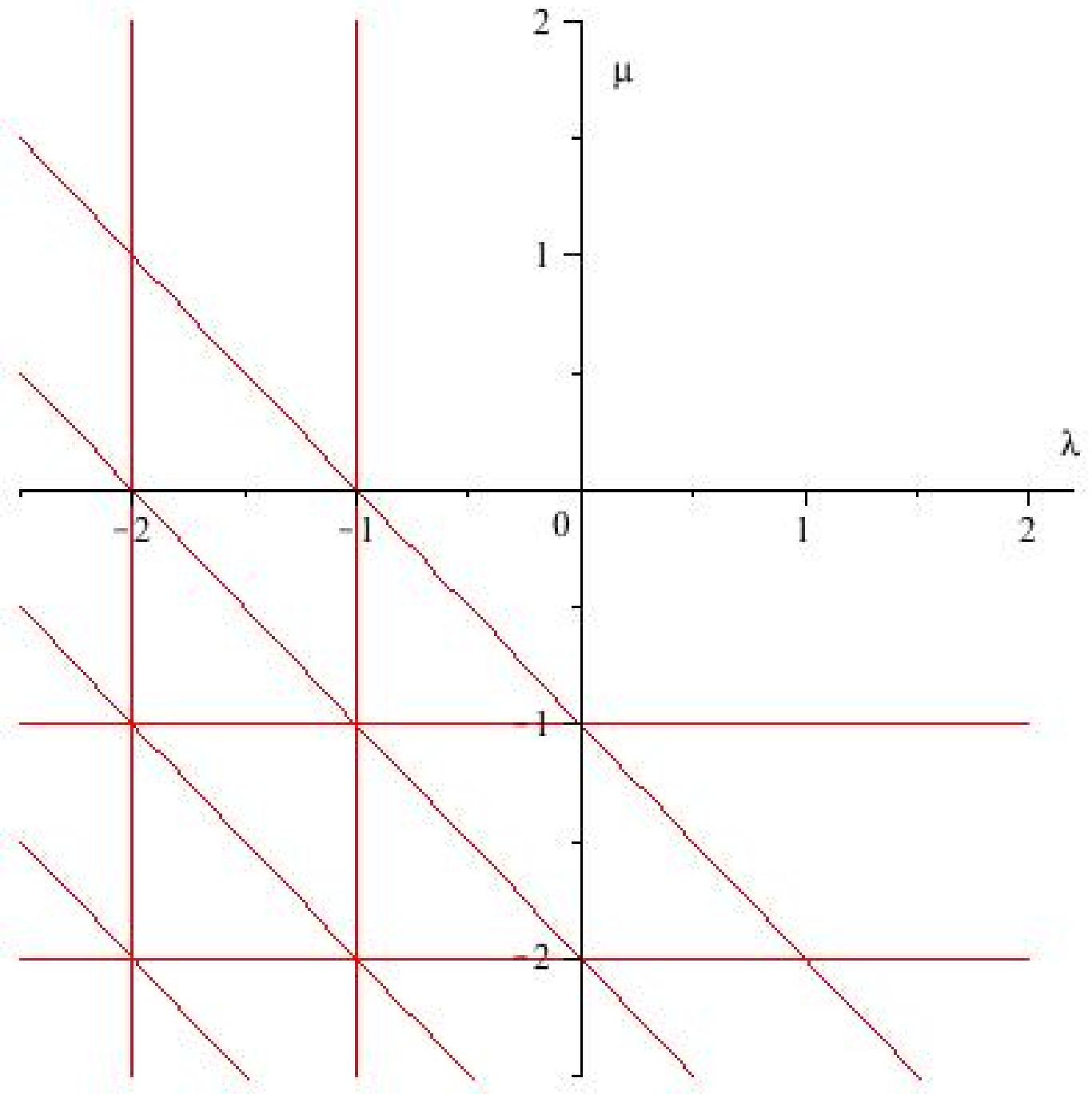}
\end{center}
In this last example, Corollary \ref{Cor:oblic2} does not apply because for $\lambda_0=-1$
we have $k_0=l_0$. Existence of an oblique polar line through $(-1,0)$ is obtained by computation of the extension of $\lambda\mapsto\Pf(\mu=1/2,\int_X |f|^{2\lambda} |g|^{2\mu} \square)$.
\end{Ex}

%%%%%%%%%%%%%%%%%%%%%%%%%%%%%%%%%%%%%%%%%%%%%
\section{Pullback and interaction}  \label{section:pullback}

In this paragraph, we give by pullback a method to verify condition (iii) of Corollary
\ref{Cor:oblic2} when $g$ is a coordinate. As a matter of fact the function 
$\lambda \mapsto  \int_X |f|^{2\lambda} |g|^{2\mu_0} \square$ is only known by meromorphic extension (via Bernstein identity) when $\mu_0$ is negative; it is in general difficult to exhibit some of its poles.

We consider therefore only one holomorphic function $f:Y\to \CC$, where $Y$ is an open subset in $\CC^{n+1}$ and fix a relatively compact open subset $X$ of $Y$. The coordinates in $\CC^{n+1}$ will be denoted by $x_1, \dots, x_n, t$. Let us introduce also the finite map
\begin{equation}  \label{E:revet}
p:\CC^{n+1} \to \CC^{n+1} \ \text{such that}\  p(x_1,\dots,x_n,\tau)=(x_1,\dots,x_n,\tau^k)
\end{equation}
 for some fixed integer $k$. Finally, put $\tilde f := f\rond p :\tilde X \to \CC$ where $\tilde X:=p^{-1}(X)$.

\begin{Prop} \label{Prop:pullback}
With the above notations and $\lambda_0\in\RR_-$ suppose
\begin{itemize}
\item[(a)] the  extension of the current valued function $(\ref{E:mero3})$ has a pole of order $\le 1$ at $\lambda_0$,
\item[(b)] $\lambda \mapsto \int_{\tilde X} |\tilde f|^{2\lambda} \square$ has a double pole at 
$\lambda_0$.
\end{itemize}
Then there exists  $l \in [1,k-1]$ such that the
extension of the current valued function $\lambda\mapsto  \int_{ X} | f|^{2\lambda} 
|t|^{-2l/k}\square$ has a double pole at $\lambda_0$.
\end{Prop}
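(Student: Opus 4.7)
The plan is to exploit the $\ZZ/k\ZZ$-action on $\tilde X$ by the deck transformation $\sigma(x,\tau)=(x,\zeta\tau)$ with $\zeta=\exp(2i\pi/k)$, which leaves $\tilde f=f\rond p$ invariant. First I would reduce to a $\sigma$-invariant test form. Given any test form $\tilde\phi$ on $\tilde X$, decompose $\tilde\phi=\sum_{l=0}^{k-1}\tilde\phi_l$ into isotypic components, $\sigma^*\tilde\phi_l=\zeta^{-l}\tilde\phi_l$. Change of variable by $\sigma$ in $\int_{\tilde X}|\tilde f|^{2\lambda}\tilde\phi_l$ multiplies it by $\zeta^{-l}$, so that integral vanishes for $l\ne 0$. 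Hence (b) produces a $\sigma$-invariant test form $\tilde\phi_0$ for which $\lambda\mapsto\int_{\tilde X}|\tilde f|^{2\lambda}\tilde\phi_0$ has a double pole at $\lambda_0$.

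Next I would write $\tilde\phi_0=c(\tau,\bar\tau,x)\,d\tau\wedge d\bar\tau\wedge\eta(x,dx,d\bar x)$ with $c$ a $\sigma$-invariant smooth function, and decompose
\begin{equation*}
c(\tau,\bar\tau,x)=\sum_{l=0}^{k-1}|\tau|^{2l}h_l(\tau^k,\bar\tau^k,x)=\sum_{l=0}^{k-1}|t|^{2l/k}h_l(t,\bar t,x),
\end{equation*}
with $h_l$ smooth on $X$. This rests on the fact that the smooth $\sigma$-invariant functions on $\tilde X$ form a free module over $p^*\cinfty(X)$ with basis $1,|\tau|^2,\ldots,|\tau|^{2(k-1)}$, a version of Schwarz's structure theorem for the $\ZZ/k$-action on $\CC_\tau$.

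Then I would push forward. From $dt=k\tau^{k-1}d\tau$ one gets $d\tau\wedge d\bar\tau=k^{-2}|t|^{-2(k-1)/k}dt\wedge d\bar t$ on $\tilde X\setminus\{\tau=0\}$; since $p$ is an unramified $k$-to-$1$ cover there and $c$ is $\sigma$-invariant, summing over fibres yields
\begin{equation*}
\int_{\tilde X}|\tilde f|^{2\lambda}\tilde\phi_0=\frac{1}{k}\sum_{l=0}^{k-1}\int_X|f|^{2\lambda}|t|^{2(l-k+1)/k}h_l(t,\bar t,x)\,dt\wedge d\bar t\wedge\eta.
\end{equation*}
Reindexing with $l'=k-1-l\in[0,k-1]$ rewrites the right-hand side as $\tfrac{1}{k}\sum_{l'=0}^{k-1}\int_X|f|^{2\lambda}|t|^{-2l'/k}\Phi_{l'}$ for smooth compactly supported test forms $\Phi_{l'}$ on $X$. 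By (a) the $l'=0$ term has pole order at most one at $\lambda_0$; if every $l'\in[1,k-1]$ term also had pole order at most one, the whole sum would, contradicting (b). Hence some $l'\in[1,k-1]$ supplies a double pole, as desired.

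The main obstacle is making the decomposition of $c$ rigorous for smooth (rather than formal-series) invariant functions and globalizing it, since the expansion is a priori a local statement near $\{\tau=0\}$. This is handled by a partition of unity on $X$: away from $\{t=0\}$, $p$ is unramified and every $\sigma$-invariant smooth form is the pullback of a smooth form, contributing only to the $l'=0$ term; near $\{t=0\}$ the local Schwarz decomposition applies. A minor secondary point is that the displayed integral identity is first established for $\Re\lambda\gg 0$, where the integrand is locally $L^1$ and the ramification set $\{\tau=0\}$ has measure zero, and then propagated by meromorphic continuation.
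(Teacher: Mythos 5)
Your strategy is the same as the paper's in its essentials: exploit the $\ZZ/k$-action $\tau\mapsto\zeta\tau$ leaving $\tilde f$ invariant, reduce to the $\sigma$-invariant component of the test form, push down under $t=\tau^k$ and read off the exponents $|t|^{-2l/k}$, then use hypothesis (a) to rule out $l=0$. The concluding contradiction ("if every $l'\in[1,k-1]$ term had a simple pole, the sum would too") is logically equivalent to the paper's direct exhibition of a non-vanishing coefficient $A_{j,j'}$.

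Where the two arguments genuinely diverge is in the decomposition of the invariant test form. You invoke a structure theorem asserting that every $\sigma$-invariant smooth function on $\tilde X$ is $\sum_{l=0}^{k-1}|\tau|^{2l}h_l(\tau^k,\bar\tau^k,x)$ with $h_l$ smooth, i.e. that the module of smooth invariants is free over $p^*\cinfty(X)$ with basis $1,|\tau|^2,\dots,|\tau|^{2(k-1)}$. You rightly flag this as the main obstacle, but you do not prove it, and it is not quite Schwarz's theorem: Schwarz gives $c=G(\tau^k,\bar\tau^k,|\tau|^2)$ with $G$ smooth, and passing from that to the \emph{finite free-module} decomposition over $p^*\cinfty(X)$ is a division-theorem-type statement (the relation $(|\tau|^2)^k=\tau^k\bar\tau^k$ prevents a naive Taylor expansion in $|\tau|^2$ from terminating). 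The paper avoids this entirely by a more elementary device: the polar part $P_2$ is a current of \emph{finite order} $N$ on a compact set (this is built into the Bernstein-identity machinery), and $P_2$ is supported in $\{\tau=0\}$ by hypothesis (a). So one only needs the finite Taylor polynomial of $\varphi$ in $(\tau,\bar\tau)$ up to order $N$; the remainder $o(|\tau|^N)$ is annihilated by $P_2$, and the $\ZZ/k$-averaging is applied monomial by monomial. This is cleaner and entirely self-contained. Your partition-of-unity remark handles globalization but not the local decomposition itself, so as written the proof has a real gap at that step; replacing the Schwarz-type decomposition by the finite Taylor expansion argument would close it.
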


\begin{proof}
Remark that  the support of the polar part of order 2 of   $ \int_{\tilde X} |\tilde f|^{2\lambda} \square$  at $\lambda_0$ is contained in $\ens{\tau=0}$ because we assume (a) and  $p$  is a local isomorphism outside  $\{\tau = 0 \}$. \\
After hypothesis (b) there exists  
$\varphi \in \Lambda^{n+1,n +1}\mathcal{C}^{\infty}_c(\tilde{X})$ such that 
$$ A : =   P_2 \big(\lambda = \lambda_0,  \int_{\tilde{X}} \vert \tilde{f }\vert ^{2\lambda}\varphi\big) \not= 0 .$$
Consider the  Taylor  expansion of $\varphi$ along $\ens{\tau=0}$ 
$$ \varphi(x,\tau) = \sum_{j+j' \leq N}  \tau^j\bar{\tau}^{j'}\varphi_{j,j'}(x)\wedge d\tau\wedge d\bar{\tau}  + o(\vert \tau \vert^N) $$
where $N$ is larger than the order of  the current  defined by $P_2$ on a compact set  $K$  containing the support of  $\varphi$.
Therefore
$$ A = \sum_{j+j' \leq N} \ P_2 \big(\lambda = \lambda_0,  \int_{\tilde{X}} \vert \tilde{f }\vert^{2\lambda}\tau^j\bar{\tau}^{j'}\varphi_{j,j'}(x)\chi(|\tau|^{2k})\wedge d\tau \wedge d\bar{\tau} \big) $$
where $\chi$ has support in \ $K$ \  and is equal to $1$ near $0$. Because $A$ does not vanish there exists   $(j,j')\in\NN^2$ with $ j + j' \leq N$ such that
$$ A_{j,j'} : =  P_2 \big(\lambda =  \lambda_0,  \int_{\tilde{X}} \vert \tilde{f }\vert ^{2\lambda}\tau^j\bar{\tau}^{j'}\varphi_{j,j'}(x)\chi(|\tau|^{2k})\wedge d\tau \wedge d\bar{\tau}\big) \not= 0 .$$ 
The change of variable  $\tau \to \exp(2i\pi/k)\tau$ that leaves $\tilde{f}$  invariant, shows that \\
$A_{j,j'} =  \exp(2i\pi(j-j')/k)A_{j,j'}$.  Hence  $A_{j,j'} = 0$ for  $j-j' \not\in k\mathbb{Z}$. 
We then get the existence of $(j,j')\in\NN^2$ verifying $j' = j + k\nu$ with  $\nu \in \mathbb{Z}$  and  $A_{j,j'} \not= 0 $.

The change of variable  $t = \tau^k$  in the computation of $A_{j,j'}$ gives
$$ P_2\big(\lambda = -\lambda_0, \int_X \vert f \vert^{2\lambda}\vert t \vert^{2(j-k+1)/k}\bar{t}^{\nu}\varphi_{j,j+k\nu}(x)\chi(|t|^2)\wedge dt\wedge d\bar{t}\big) \not= 0 .$$
This ends the proof with  $- l = j-k+1$ if $\nu \geq 0$ and with $-l = j'-k+1 $ if 
 $\nu < 0$. 
 Notice that  $l < k $ in all cases. Necessarily $l \not= 0$ because from hypothesis (a), we know that the extension of the function $(\ref{E:mero3})$  does not have a double pole  at  
$\lambda_0$. 
\end{proof}

\begin{Thm}\label{Thm:interaction}
For $Y$ open in $\CC^{n+1}$ and $X$ relatively compact open subset of $Y$,
let $f:Y\to\CC$ be holomorphic and $g(x,t)=t$. Assume $(f,g)$
satisfy properties $(1)$ to $(4)$ of the Introduction. Moreover suppose
\begin{enumerate}[(a)]
\item $\int |f|^{2\lambda}\square$ has a at most a  simple pole at $\lambda_0 - \nu$ , $ \forall \nu \in \mathbb{N}$;
\item $e^{2\pi i \lambda_0}$  is a eigenvalue of the monodromy of  $f$  acting on the $H^{n-1}$ of the Milnor fiber of  $f$  at the generic point of a connected component $S_i^*$ of $S^*$, and there exist a non zero eigenvector such its monodromy around $0$ in  $S_i$  is a primitive  $k-$root of unity, with  $k \geq 2$;
\item the degree  $d_i$  of the covering  $t : S_i^* \to \DD^* $  is prime to  $k$;
\item  $e^{2\pi i \lambda_0}$  is not an eigenvalue of the monodromy of  $\tilde{f}$  acting on the  $H^{n-1}$  of the Milnor fiber of  $\tilde{f}$   at  $0$  , where 
$\tilde f(x,\tau)=f(x_1,\dots,x_n,\tau^k)$.
\end{enumerate}

Then there exists an oblique pole of $\int |f|^{2\lambda} |g|^{2\mu}\square$ through
$(\lambda_0-j,-	l/k)$, some $j\in \NN$, and some   $l \in [1,k-1]$.
\end{Thm}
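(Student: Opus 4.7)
The plan is to chain together three results: first produce a double pole of $\int|\tilde f|^{2\lambda}\square$ at some $\lambda_0 - j$ via the interaction-of-strata mechanism of \cite{B:91} applied to $\tilde f$, then feed this into Proposition~\ref{Prop:pullback} to obtain a double pole of $\lambda \mapsto \int_X |f|^{2\lambda}|t|^{-2l/k}\square$ at $\lambda_0 - j$ for some $l \in [1,k-1]$, and finally invoke Corollary~\ref{Cor:oblic2} at $(\lambda_0 - j, -l/k)$ to extract the oblique polar line.

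The key geometric input is the conversion of the eigenvector of hypothesis (b) into an invariant global section upstairs. Parametrize the branch $S_i^*$ by $s \in \DD^*$ with $t|_{S_i}(s) = s^{d_i}$; since $\gcd(d_i,k) = 1$ by (c), the preimage $p^{-1}(S_i^*)$ is irreducible and parametrized by $u \in \DD^*$ via $s = u^k$, $\tau = u^{d_i}$. A generator of $\pi_1(p^{-1}(S_i^*))$ therefore projects to $k$ times a generator of $\pi_1(S_i^*)$, so the pulled-back local system carries monodromy $(\Theta^i)^k$ around $0$. Applied to the eigenvector $v$ furnished by (b), whose eigenvalue is a primitive $k$-th root of unity $\zeta$, this gives $(\Theta^i)^k v = \zeta^k v = v$, producing a non-zero class in $H^0(p^{-1}(S_i^*), H^{n-1}(u))$ for $\tilde f$, where $u \equiv -\lambda_0 \pmod \ZZ$. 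The interaction-of-strata theorem of \cite{B:91} then asserts that a non-zero generic invariant section which does not come from $0$ forces a higher-order pole of $\int|\tilde f|^{2\lambda}\square$ at some $\lambda_0 - j$ with $j \in \NN$; hypothesis (d) is exactly the statement that the relevant eigenspace of the monodromy of $\tilde f$ at $0$ is zero, so no extension is possible and we obtain the desired double pole.

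With this double pole of $\int|\tilde f|^{2\lambda}\square$ at $\lambda_0 - j$ in hand, and hypothesis (a) providing the simple-pole condition at $\lambda_0 - j$ for $\int|f|^{2\lambda}\square$, Proposition~\ref{Prop:pullback} yields some $l \in [1,k-1]$ such that $\int_X |f|^{2\lambda}|t|^{-2l/k}\square$ has a double pole at $\lambda_0 - j$. We then apply Corollary~\ref{Cor:oblic2} at $(\lambda_0 - j, -l/k)$: condition (i) holds with pole order $1$ by (a); since the Bernstein polynomial of $g = t$ is $s + 1$, whose integer translates all lie in $-\NN$, while $-l/k \in (-1,0)$, condition (ii) is automatic; condition (iii) holds with $l_0 \geq 2$ while $k_0 \leq 1$. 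The corollary then delivers at least one oblique polar line through $(\lambda_0 - j, -l/k)$.

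The main obstacle is the middle step: transferring the interaction-of-strata formalism of \cite{B:91} across the ramified pullback $p$. One must check that the constructible sheaf of vanishing cycles of $\tilde f$ on $p^{-1}(S)$ genuinely contains the pullback of $H^{n-1}(u)$ for $f$ on $S$, that the global section $p^*v$ constructed above is both invariant and non-zero in the $\tilde f$-formalism, and that hypothesis (d) translates into the non-extension of this section at $0$ in the precise sense required. The coprimality hypothesis (c) is essential here: without it, $p^{-1}(S_i^*)$ would split into $\gcd(d_i,k) > 1$ components and the pulled-back monodromy could fail to have eigenvalue $1$ on each of them, spoiling the construction of the invariant section.
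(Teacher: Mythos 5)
Your proposal is correct and follows essentially the same route as the paper: pull back by $p$, use coprimality of $k$ and $d_i$ to convert the primitive $k$-th-root eigenvector into an invariant global section of $\tilde H^{n-1}(u)$ on $\tilde S_i^*$, invoke (d) to conclude non-extendability at $0$ and hence (by interaction of strata from \cite{B:91}) a double pole of $\int|\tilde f|^{2\lambda}\square$ at some $\lambda_0-j$, then finish via Proposition \ref{Prop:pullback} and Corollary \ref{Cor:oblic2}. The only presentational difference is that you verify the cyclic-cover property of $p^{-1}(S_i^*)\to S_i^*$ by the explicit normalization parametrization $u\mapsto(u^k,u^{d_i})$, whereas the paper isolates the same fact as Lemma \ref{revetement} and proves it with a B\'ezout argument.
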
 

Remark that condition (b) implies that  $\lambda_0 \not\in \mathbb{Z}$  because of the result of \cite{B:84b}.

\begin{proof}
Notice first that $(\lambda,\mu) \mapsto \int |f|^{2\lambda} |g|^{2\mu}\square$ has a simple
pole along $\ens{\lambda=\lambda_0}$. Indeed the support of the residue current of
$\lambda \mapsto \int | f|^{2\lambda} \square$ at $\lambda_0$ contains $S_i^*$ where $t$ does not vanish and Proposition \ref{Prop:supportpole} applies.

 Denote by $z$ a local coordinate on the normalization of $S_i$. The function $t$ has a zero of order $d_i$ on this normalization and hence  $d_i$  is the degree of the cover $S_i^* \to \DD^*$ induced by $t$. Without loss of generality, we may suppose $t=z^{d_i}$ on the normalization of $S_i$.

\begin{Lem}\label{revetement}
Let $k, d \in \mathbb{N}^*$ and put
$$ Y_d: = \{ (z,\tau) \in \DD^2 /  \tau^k = z^d \} ,  Y_d^*:=Y_d\setminus \zeroset.$$
If $k$ and $d$ are relatively prime, then the first projection $ \pr_{1,d}:Y_d^*\to \DD^*$ is a cyclic cover  of degree $k$.
\end{Lem}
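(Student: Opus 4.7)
The plan is to explicitly exhibit both the covering structure and the cyclic deck group, with the hypothesis $\gcd(k,d)=1$ used precisely to guarantee connectedness of $Y_d^*$.

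First I would check that $\pr_{1,d}: Y_d^* \to \DD^*$ is an unramified analytic covering of degree $k$. Since the origin is removed, over any $z \in \DD^*$ the equation $\tau^k = z^d$ has exactly $k$ distinct solutions (the $k$-th roots of $z^d$), and the differential $k\tau^{k-1}\,\d\tau$ is nonzero there, so $Y_d^*$ is a smooth curve and $\pr_{1,d}$ is a local biholomorphism with fibres of cardinality $k$.

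Next I would produce the cyclic deck group. Letting $\zeta := \exp(2i\pi/k)$, the automorphism $\sigma(z,\tau) := (z,\zeta\tau)$ preserves the equation $\tau^k = z^d$, commutes with $\pr_{1,d}$, has order exactly $k$, and acts transitively on every fibre (since the fibre is the set of $k$-th roots of $z^d$). Thus $\langle\sigma\rangle \cong \ZZ/k\ZZ$ sits inside the deck group and acts simply transitively on fibres.

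The remaining point — and this is where the coprimality hypothesis is essential — is connectedness of $Y_d^*$. I would prove this by exhibiting the parameterization
\[
  \nu : \DD^* \longrightarrow Y_d^*, \qquad \nu(s) := (s^k, s^d),
\]
which is clearly continuous with image in $Y_d^*$. Since $\DD^*$ is connected, it suffices to show $\nu$ is surjective. Given $(z,\tau) \in Y_d^*$, choose any $k$-th root $s_0$ of $z$; then $(s_0^d)^k = z^d = \tau^k$, so $s_0^d = \zeta^j \tau$ for some $j \in \ZZ$. By $\gcd(k,d)=1$, the residue of $d$ is invertible modulo $k$, so one can pick $m \in \ZZ$ with $md \equiv j \pmod k$, and then $s := \zeta^{-m} s_0$ satisfies $s^k = z$ and $s^d = \zeta^{-md}s_0^d = \zeta^{j-md}\tau = \tau$, so $\nu(s) = (z,\tau)$. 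Hence $Y_d^*$ is connected, and together with the previous step this says $\pr_{1,d}$ is a connected Galois cover of degree $k$ with cyclic Galois group $\langle\sigma\rangle$.

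The only genuine obstacle is connectedness, and it is where the hypothesis bites: if $e := \gcd(k,d) > 1$, writing $k = k'e$, $d = d'e$, the polynomial $\tau^k - z^d$ factors as $\prod_{\eta^e = 1}(\tau^{k'} - \eta z^{d'})$, splitting $Y_d^*$ into $e$ disjoint pieces, each a cyclic $k'$-cover of $\DD^*$, so the conclusion of the lemma would fail. Aside from that, every step reduces to a direct computation with roots of unity.
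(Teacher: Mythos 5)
Your proof is correct, and it takes a genuinely different route from the paper. The paper proves the lemma by writing down an explicit isomorphism of covers $\ph: Y_d^* \to Y_1^*$, $\ph(z,\tau) = (z, z^a\tau^b)$ with $ak + bd = 1$ from B\'ezout, and taking $Y_1^* \to \DD^*$ as the definition of the degree-$k$ cyclic cover; coprimality enters through B\'ezout, and the work is in checking that $\ph$ is bijective and compatible with the projections. You instead verify the covering axioms directly (unramified, degree $k$, cyclic deck group $\langle\sigma\rangle$ with $\sigma(z,\tau)=(z,\zeta\tau)$ acting simply transitively on fibres) and reduce everything to connectedness of $Y_d^*$, which you obtain from surjectivity of the parameterization $\nu(s)=(s^k,s^d)$; coprimality enters as invertibility of $d$ modulo $k$. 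Both approaches hinge on the same arithmetic fact, but yours avoids the slight awkwardness in the paper's map (the exponents $a,b$ may be negative, so $\ph$ really only makes sense on $Y_d^*$ and boundedness of $z^a\tau^b$ needs a word), and it isolates cleanly what coprimality is \emph{for}: it is exactly the connectedness of $Y_d^*$, as your factorization $\tau^k - z^d = \prod_{\eta^e=1}(\tau^{k'}-\eta z^{d'})$ when $e=\gcd(k,d)>1$ shows. The map $\nu$ you use is of course the normalization of the cusp $\tau^k=z^d$, which makes the argument quite transparent. One very small point worth a sentence in a polished write-up: to pass from ``local biholomorphism with constant finite fibre cardinality'' to ``covering map'' you should observe properness of $\pr_{1,d}$ (e.g.\ from $|\tau|=|z|^{d/k}$), but this is routine.
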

\begin{proof}
Let us prove that the cover defined by $\pr_{1,d}$ is isomorphic to the cover 
defined by $\pr_{1,1}$, that may be taken as definition of a cyclic cover of degree $k$.

After B\'ezout's identity, there exist $a, b\in \ZZ$ such that 
\begin{equation} \label{E:Bezout}
	 ak +bd=1.
\end{equation}
Define $\ph:Y_d\to \CC^2$ by $\ph(z,\tau)=(z,z^a\tau^b)$. From (\ref{E:Bezout}), we have
$\ph(Y_d)\subseteq Y_1$ and clearly $\pr_{1,1}\rond\ph = \pr_{1,d}$.

The map $\ph$ is injective because 
$$ \ph(z,\tau) = \ph(z,\tau') \implies \tau^k=\tau'^k \ \text{and} \ \tau^b=\tau'^b,
$$
hence $\tau = \tau'$, after (\ref{E:Bezout}). It is also surjective: take $(z,\sigma)\in Y_1^*$;
the system 
$$ \tau^b=\sigma z^{-a}, \ \tau^k=z^d, \ \text{when} \  \sigma^k=z,
$$
has a unique solution because the compatibility condition $\sigma^k z^{-ak} = z^{bd}$
is satisfied.
\end{proof}

{\it End of proof of Theorem \ref{Thm:interaction}.}
Take the eigenvector with monodromy   $\exp(-2i\pi l/k)$ \ with \ $(l,k) = 1$ given by condition (b). Its   pullback by $p$ becomes invariant under the monodromy of $\tau$ because of the condition (c) and the lemma given above. After (d), this section does not extend through $0$. So we have interaction of strata (see \cite{B:91}) and a double pole for $\lambda\mapsto\int |\tilde f|^{2\lambda}\square$ at $\lambda_0-j$ with some $j\in\NN$. It remains to use Proposition \ref{Prop:pullback} and Corollary \ref{Cor:oblic2}.
\end{proof}

%%%%%%%%%%%%%%%%%%%%%%%%%%%%%%%%%%%%%%%%%%%%%
\section{Interaction of strata revised}

In this paragraph notations and hypotheses are those of the Introduction. Here, the function $t$ is the last coordinate.
We suppose that the eigenvalue  $\exp(-2i\pi u)$ of the monodromy of  $f$ is simple at each point of $S^*$. Therefore, this eigenvalue is also simple for the  monodromy acting on the group $H^{n-1}$ of the Milnor fibre of  $f$ at $0$. In order to compute the constructible sheaf
$H^{n-1}(u)$ on $S$ we may use the complex  
$(\Omega_X[f^{-1}], \delta_u)$, that is the complex of meromorphic forms with poles in
 $f^{-1}(0) $ equipped with the differential $\delta_u : = d  - u\frac{\d f}{f}\wedge $ along $S$. This corresponds to the case $k_0 = 1$  in \cite{B:91}. 
 
 We use the isomorphisms
\begin{equation}  \label{E:isoms}
  r^{n-1} : h^{n-1} \to  H^{n-1}(u) \ \text{over} \  S \quad \text{and} \quad 
  \tau_1 : h^{n-1} \to h^n   \ \text{over} \    S^*,
\end{equation}  
where $h^{n-1}$  [resp. $h^n$]  denotes the $(n-1)-$th [resp.   $n-$th]  cohomology sheaf of the complex  $(\Omega_X[f^{-1}], \delta_u)$.

\smallskip

In order to look at the eigenspace for the eigenvalue \ $\exp(-2i\pi l/k)$ \ of the monodromy \ $\Theta$ \ of the local system \ $H^{n-1}(u)$ \ on \ $S^*$, it will be convenient to consider the complex of sheaves
$$ \Gamma_l : = \big(\Omega^{\bullet}[f^{-1},t^{-1}], \delta_u - \frac{l}{k}\frac{dt}{t}\wedge \big) $$
which is locally isomorphic along \ $S^*$ \  to \  $(\Omega_X[f^{-1}], \delta_u)$ \ via the choice of a local branch of \ $t^{l/k}$ \ and the morphism of complexes  \ $(\Omega_X[f^{-1}], \delta_u) \to \Gamma_l$ \ given by  \ $\omega \mapsto t^{l/k}.\omega$ \ which satisfies 
$$\delta_u(t^{l/k}\omega) -\frac{l}{k}\frac{dt}{t}\wedge t^{l/k}\omega =  t^{l/k}\delta_u(\omega) .
$$
 But notice that this complex \ $\Gamma_l$ \ is also defined near the origin. Of course, a global section \ $ \sigma \in H^0(S^*, h^{n-1}(\Gamma_l))$ \ gives, via the above local isomorphism, a multivalued global section on \ $S^*$ \  of the local system \ $H^{n-1}(u) \simeq h^{n-1}$ \ with monodromy \ $\exp(-2i\pi l/k)$ \ (as multivalued section).\\
So a global meromorphic differential $(n-1)-$form \ $\omega$ \ with poles in \ $\{f.t = 0 \}$ \ such that \ $d\omega = u\frac{\d f}{f}\wedge\omega + \frac{l}{k}\frac{dt}{t}\wedge\omega$ \ defines such a \ $\sigma$, and an element in \ $\mathcal{H}_0$ \ with monodromy 
$\exp(-2i\pi l/k)$.\\
We shall use also the morphism of complexes of degree +1 
$$ \check{\tau}_1: \Gamma_l \to \Gamma_l $$
given by \ $\check{\tau}_1(\sigma) = \frac{df}{f}\wedge \sigma $. It is an easy consequence of \cite{B:91} 
that in our situation \ $ \check{\tau}_1$ \ induces an isomorphisme \ $\check{\tau}_1 : h^{n-1}(\Gamma_l) \to h^n(\Gamma_l)$ \ on \ $S^*$, because we have assumed that the eigenvalue \ $\exp(-2i\pi u)$ \ for the monodromy of  $f$  is simple along  $S^*$.

\bigskip

Our first objective is to build for each \ $ j\in\NN$ \ a morphism of sheaves on $S^*$
\begin{equation} \label{E:morphism}
r_j :  h^{n-1}(\Gamma_l) \to \underline{H}^n_{[S]}(\mathcal{O}_X), 
\end{equation}
via the meromorphic extension of $\int_X \vert f \vert^{2\lambda}|t|^{2\mu}\square$.
Here $\underline{H}^n_{[S]}(\mathcal{O}_X)$ denotes the subsheaf of the moderate cohomology with support $S$ of the sheaf $\underline{H}^n_{S}(\mathcal{O}_X)$. 
It is given by the  $n-$th cohomology sheaf of the  Dolbeault-Grothendieck complex with support $S$ :
$$ \underline{H}^n_{[S]}(\mathcal{O}_X)  \simeq \mathcal{H}^n(\underline{H}^0_S(\mathcal{D}b_X^{0,\bullet}), \d'') .$$
Let  $w$ be a $(n-1)-$meromorphic form with poles in \ $\{f = 0 \}$, satisfying \ $\delta_u(w) = \frac{l}{k}\frac{\d t}{t}\wedge w$ \ on an open neighbourghood \ $U \subset X \setminus \{t = 0\}$ \ of a point in \ $S^*$. Put for   $j \in \mathbb{N}$ :
$$ \overline{r_j(w)} : = \Res\big(\lambda = - u, \Pf(\mu=-l/k,\int_U \ \vert f \vert^{2\lambda}|t|^{2\mu}\bar{f}^{-j} \frac{\d f}{f} \wedge w \wedge \square) \big).$$
These formula define $\d'-$closed currents of type $(n,0)$ with support in $S^*\cap U$.

Indeed it is easy to check that the following formula holds in the sense of currents on $U $:
$$\begin{aligned}
    \d' \big[\Pf\big(&\lambda = - u, \Pf(\mu=-l/k,\int_{U} \  \vert f \vert^{2\lambda}|t|^{2\mu}\bar{f}^{-j}  w \wedge \square) \big)\big] =\\
     &\Res\big(\lambda = - u, \Pf(\mu=-l/k,\int_{U} \ \vert f \vert^{2\lambda}|t|^{2\mu}\bar{f}^{-j} \frac{\d f}{f} \wedge w \wedge \square) \big).
     \end{aligned}
$$

On the other hand, if $ w = \delta_u(v)- \frac{l}{k}\frac{\d t}{t}\wedge v$ \  for \  $v \in \Gamma(U,  \Omega^{n-2}[f^{-1}])$, then
$$ \begin{aligned}
     \d' \big[\Res\big(&\lambda = - u,  \Pf(\mu=-l/k,\int_{U} \ \vert f \vert^{2\lambda}|t^{2\mu}
     \bar{f}^{-j}   \frac{\d f}{f} \wedge v\wedge\square \big)\big] = \\ 
     &\Res\big(\lambda = - u,  \Pf(\mu=-l/k,\int_{U} \ \vert f \vert^{2\lambda}|t|^{2\mu}\bar{f}^{-j}   
     \frac{\d f}{f} \wedge w \wedge \square )\big)
     \end{aligned}
$$
 because  the meromorphic extension of \ $\int_X \vert f\vert^{2\lambda}\square $ has no double poles at  $\lambda \in -u - \mathbb{N}$  along  $S^*$, since  $\exp(-2i\pi u)$  is a simple eigenvalue of the monodromy of  $f$ along  $S^*$. It follows that the morphism of sheaves $(\ref{E:morphism})$ is well defined on  $S^*$.

By direct computation we show the following equality between sections  on  $S^*$ 
of the sheaf $ \underline{H}^n_{[S]}(\Omega_X^1)$ :
$$ \d'r_j(w) = -(u + j)\d f \wedge r_{j+1}(w) $$
where $\d' :  \underline{H}^n_{[S]}(\mathcal{O}_X) \to \underline{H}^n_{[S]}(\Omega_X^1)$ is the morphism induced by the  de Rham  differential $\d : \mathcal{O}_X \to \Omega_X^1 $.

Because  $ \underline{H}^n_{[S]}(\mathcal{O}_X)$ is a sheaf of $\mathcal{O}_X-$modules, it is possible to define the product $g. r_j$ for  $g$  holomorphic near a point of $S^*$ and the usual rule holds
 $$\d'(g .r_j) = \d g\wedge r_j + g.\d'\rho_j.$$
 
 Now we shall define, for each irreducible component $S_i$  of $S$  such that the local system $H^{n-1}(u)^i$ has  $\exp(-2i\pi l/k)$  as  eigenvalue for its monodromy  $\Theta^i$,  linear maps
 $$ \rho_j^i : \Ker \big(\Theta^i - \exp(-2i\pi l/k)\big) \to H^0(S^*_i, \underline{H}^n_{[S]}(\mathcal{O}_X)) $$
 as follows :\\
Let \ $s_i\in S^*_i$ \ be a base point and let \ $\gamma \in H^{n-1}(u)_{s_i}$  be such that $\Theta^i(\gamma) = \exp(2i\pi l/k).\gamma$. Denote by  $\sigma(\gamma)$  the multivalued section of the local system  $H^{n-1}(u)^i$ on  $S^*_i$  defined by  $\gamma$. Near each point of $s \in S^*_i$ we can induce $\sigma$ by a meromorphic $(n-1)-$form $w_0$ which is $\delta_u-$closed. Choose a local branch of $t^{1/k}$ near the point  $s$ and put $w:=t^{l/k}w_0$. Then it is easy to check that we define  in this way a global section $\Sigma(\gamma)$ on $S^*_i$ of the sheaf $h^{n-1}(\Gamma_l)$ which is independent of our choices. Now set 
 $$\rho_j^i(\gamma) : =  r_j(\Sigma(\gamma)).$$
 
\bigskip

Like in paragraph \ref{section:pullback}, define $\tilde f:\tilde X\to\CC$ by $\tilde f:=f\rond p$ with $ p$ of $(\ref{E:revet})$.
The singular locus $\tilde S$ of $\tilde f$ is again a curve, but it may have components contained in $\ens{\tau=0}$ (see for instance Example \ref{Ex:premier}). Let $\tilde{S}^* : = \tau^{-1}(\DD^*)$ 
(so in  $\tilde{S}$  we forget about the
components that are in  $\tau^{-1}(0)$) and define the local system $\tilde{\mathcal{H}}$ 
on  $\DD^*$  as \ $\tau_*(\tilde{H}^{n-1}(u)|_{\tilde{S}^*})$.
Denote its fiber
$\tilde\H_0$  at some $\tau_0$ with $\tau_0^k=t_0$ and the monodromy $\tilde\Theta_0$ of $\tilde\H_0$ .

We have
$$ \tilde\Theta_0 = (  \pi_*)^{-1}\rond\Theta_0\rond  \pi_*, \ \text{where} \
     \pi(\tau):=\tau^k,
$$
and $ \pi_*: \tilde\H_0\to \H_0$ is the isomorphism induced by $ \pi$.

\bigskip

Choose now the base points $s_i$ of the connected components $S^*_i$ of $S^*$ in $\{ t = t_0\}$ where $t_0$ is the base point of $\DD^*$. Moreover choose the base point  $\tau_0\in \CC^*$  such that  $\tau_0^k=t_0$.

In order to use the results of \cite{B:91}, we need to guarantee that  for the component
$S_i^*$ of $S^*$, the map $p^{-1}(S_i^*)\to S_i^*$ is the  cyclic cover of degree $k$.\\
 Fix a base point  $\tilde s_i\in p^{-1}(S^*_i)$ such that $p(\tilde{s}_i) = s_i$. The local system $\tilde H^{n-1}(u)$ on 
the component $\tilde S^*_i$ of $\tilde S^*$ containing $\tilde s_i$ is given by $\tilde H^{n-1}(u)_{\tilde s_i}$  which is isomorphic to $ H^{n-1}(u)_{ s_i}$,
and the monodromy automorphism  $\tilde{\Theta}^i$. In case $p:\tilde S^*_i \to S^*_i$
is the cyclic cover of degree $k$, we have $\tilde{\Theta}^i= (\Theta^i)^k$. 

After Lemma \ref{revetement}, this equality is true if $k$ is prime to the degree $d_i$ of the covering $ t : S_i^* \to \DD^*$.  

Let  $\tilde{\gamma}$ be the element in $(\tilde{H}^{n-1}(u)^i)_{\tilde{s_i}}$ whose image by $p$ is $\gamma$. Let $\sigma(\tilde{\gamma})$ the multivalued section of the local system $\tilde{H}^{n-1}(u)^i$ on $\tilde{S}^*_i$ given by $\tilde{\gamma}$ on $\tilde{S}_i^*$. By construction, if $(k,d_i)=1$, we get $\tilde{\Theta^i}\tilde{ \gamma} = \tilde{\gamma}$. Therefore $\sigma( \tilde \gamma)$ is in fact  a global  (singlevalued) section of the local system  $\tilde H^{n-1}(u)^i$ over $\tilde S^*_i$.

\begin{Thm} \label{Thm:noninter}
Notations and hypotheses are those introduced above. 
Take  $\gamma \in H^{n-1}(u)_{s_i}$ such that $\Theta^i(\gamma) = \exp(-2i\pi l/k)\gamma$
where $\Theta^i$ is the monodromy of $H^{n-1}(u)_{s_i}$ and  $l$ is an integer prime to  
$k$, between $1$ and $k-1$. \\
 Assume that $k$ is relatively prime to the degree of the cover $t|_{S_i^*}$ of $\DD^*$.\\
If the section $\sigma(\tilde\gamma)$  of  $\tilde{H}^{n-1}(u)^i$  on   $\tilde{S}^*_i$  defined by   $\gamma$   is the restriction to  $\tilde{S}^*_i$  of a global section  $W$   on  $\tilde{S}$  of the constructible sheaf  $\tilde{H}^{n-1}(u)$   then there exists
 $\omega \in \Gamma(X, \Omega^{n-1}_X)$  such that  the following properties are satisfied:
\begin{enumerate}
\item  $\displaystyle \d\omega = (m+u)\frac{\d f}{f} \wedge \omega + \frac{l}{k}\frac{\d t}{t} \wedge \omega $, for some $m\in\NN$;
\item The $(n-1)-$meromorphic $\delta_u-$closed   form $t^{-l/k}\omega/f^m$   induces a section on  $S$  of the sheaf   $h^{n-1}(\Gamma_l)$  whose restriction to  $S^*_i$  is given by  $\Sigma(\gamma)$;
\item the current
$$T_j : =  \Res\big( \lambda = -m-u,Pf(\mu = -l/k, \int_X \ \vert f\vert^{2\lambda}\bar{f}^{m-j}\vert t \vert^{2\mu}\frac{\d f}{f}\wedge \omega \wedge \square) \big) $$
satisfies $\d'T_j=\d'K_j$ for some current $K_j$ supported in the origin and  
$T_j - K_j$ is a $(n,0)-$current supported in  $S$ whose conjugate   induces a global section on  $S$  of the sheaf  $\underline{H}^n_{[S]}(\mathcal{O}_X)$  which is equal to  $r_j(\gamma)$  on  $S^*_i$.
\end{enumerate}
\end{Thm}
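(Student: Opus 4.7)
The strategy is to reduce the claim to the interaction-of-strata theorem of \cite{B:91} applied on the cyclic cover $\tilde X=p^{-1}(X)$ with $p(x,\tau)=(x,\tau^k)$, and then to descend the resulting global holomorphic form to $X$ by extracting the $\zeta^l$-isotypic component under the natural $\ZZ/k$-action, where $\zeta:=\exp(2i\pi/k)$. First, thanks to $(k,d_i)=1$ and Lemma \ref{revetement}, the lifted monodromy satisfies $\tilde\Theta^i=(\Theta^i)^k$, hence $\tilde\Theta^i\tilde\gamma=\tilde\gamma$; thus $\sigma(\tilde\gamma)$ is a single-valued section of $\tilde H^{n-1}(u)^i$ on $\tilde S^*_i$, which by hypothesis extends to the global section $W$ on $\tilde S$. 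The argument used in Proposition \ref{Prop:pullback} combined with hypothesis (a) ensures that the pulled-back current-valued function $\int_{\tilde X}|\tilde f|^{2\lambda}\square$ has at most a simple pole along $\tilde S^*$ at every $\lambda_0-\nu$, so that we are in the simple eigenvalue framework ($k_0=1$) of \cite{B:91}.

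Secondly, since $W$ is a global section of $\tilde H^{n-1}(u)$ on all of $\tilde S$, \cite{B:91} produces an integer $m\in\NN$ and a global holomorphic form $\tilde\omega\in\Gamma(\tilde X,\Omega^{n-1})$ satisfying $\d\tilde\omega=(m+u)(\d\tilde f/\tilde f)\wedge\tilde\omega$, such that the class of $\tilde\omega/\tilde f^m$ in $h^{n-1}(\Omega^\bullet_{\tilde X}[\tilde f^{-1}],\delta_u)$ is mapped to $W$ under $r^{n-1}$. The same reference provides the associated residue current
$$\tilde T_j:=\Res\bigl(\lambda=-m-u,\,\int_{\tilde X}|\tilde f|^{2\lambda}\bar{\tilde f}^{m-j}\tfrac{\d\tilde f}{\tilde f}\wedge\tilde\omega\wedge\square\bigr),$$
which is $\d'$-closed modulo a current $\tilde K_j$ supported at $0$, and whose conjugate represents $r_j(W)$ on $\tilde S$.

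Thirdly, the action $\sigma_\zeta(x,\tau):=(x,\zeta\tau)$ preserves $\tilde f$ and the complex $(\Omega^\bullet[\tilde f^{-1}],\delta_u)$, so the eigen-decomposition $\tilde\omega=\sum_{j=0}^{k-1}\tilde\omega_j$ with $\sigma_\zeta^*\tilde\omega_j=\zeta^j\tilde\omega_j$ is $\delta_u$-compatible and each summand satisfies the same equation as $\tilde\omega$. Because $\sigma(\tilde\gamma)$ is built from $\sigma(\gamma)$ via the trivialization branch $t^{l/k}=\tau^l$, the cohomology class $W$ lies in the $\zeta^l$-isotypic part, so $\tilde\omega_l/\tilde f^m$ alone represents $W$. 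Any $(n-1)$-form $\alpha$ on $\tilde X$ with $\sigma_\zeta^*\alpha=\zeta^l\alpha$ factors uniquely as $\alpha=\tau^l\cdot p^*\eta$ for some meromorphic $\eta$ on $X$ with at most a simple pole along $\{t=0\}$; writing $\eta=\omega/f^m$, the identity $\d\tau/\tau=(1/k)\d t/t$ converts the upstairs equation for $\tilde\omega_l$ into property (1), and property (2) is immediate from the very design of the morphism of complexes $\omega\mapsto t^{l/k}\omega$ from $(\Omega_X[f^{-1}],\delta_u)$ into $\Gamma_l$. For property (3), I would push $\tilde T_l$ forward by the finite proper map $p$: the factor $\tau^l\bar\tau^l$ coming from $\tilde\omega_l\wedge\overline{\tilde\omega_l}$ combines with $|\tilde f|^{2\lambda}=|f|^{2\lambda}\rond p$ and with $|\tau|^{2k\mu}=|t|^{2\mu}\rond p$ to produce, after integration in the $\tau$-variable against a test form, exactly the residue $\Pf_{\mu=-l/k}$ up to a nonzero multiplicative constant. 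This yields the stated current $T_j$, with $K_j:=p_*\tilde K_l$ still supported at the origin.

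The main obstacle lies in the third step: one must simultaneously verify that $W$ belongs precisely to the $\zeta^l$-isotypic component (rather than some other combination of eigencomponents), and that the descended meromorphic form $\eta$ can be renormalized so that $\omega$ is \emph{globally holomorphic} on $X$ (the PDE itself forcing $\omega$ to vanish along $\{t=0\}$ and $\d f\wedge\omega$ to vanish along $\{f=0\}$). This hinges on a careful local analysis at the points of $S\cap\{t=0\}$, exploiting the explicit structure of the morphism $\omega\mapsto t^{l/k}\omega$ into the twisted complex $\Gamma_l$ together with the covering structure of $\tilde S^*_i\to S^*_i$.
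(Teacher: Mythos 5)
Your overall strategy matches the paper's: lift to the cyclic cover $\tilde X = p^{-1}(X)$, apply the interaction-of-strata machinery of \cite{B:91} to the $\tilde\Theta^i$-invariant section, decompose the resulting holomorphic form $\tilde\omega$ into $\mathfrak{G}_k$-eigencomponents, and descend the relevant one to $X$. However, you have chosen the \emph{wrong} eigencomponent, and this error manufactures the "main obstacle'' you cannot resolve. Since the local representative of $\Sigma(\gamma)$ is $w = t^{l/k} w_0$ with $p^*w$ $\mathfrak{G}_k$-invariant and $Z^*\tau^l = \zeta^l\tau^l$, one gets $Z^*(p^*w_0) = \zeta^{-l}p^*w_0$; the class $W$ therefore lies in the $\zeta^{-l}$-isotypic (equivalently $\zeta^{k-l}$-isotypic) part, and the relevant component is $\tilde\omega_{k-l}$, \emph{not} $\tilde\omega_l$.

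This matters precisely where you flag your gap. With the correct choice, $\tau^{l}\tilde\omega_{k-l}$ is $\mathfrak{G}_k$-invariant \emph{and holomorphic} on $\tilde X$ (since $l\geq 1$ and $\tilde\omega_{k-l}$ is holomorphic), hence it is the pullback $p^*\omega$ of a genuinely holomorphic $(n-1)$-form $\omega$ on $X = \tilde X/\mathfrak{G}_k$ --- there is no meromorphic renormalization to perform. The identity $\d\tau/\tau = \tfrac{1}{k}\,\d t/t$ then converts $\d\tilde\omega_{k-l}=(m+u)\tfrac{\d\tilde f}{\tilde f}\wedge\tilde\omega_{k-l}$ into exactly property (1), with the correct coefficient $\tfrac{l}{k}$ rather than the $\tfrac{k-l}{k}$ one would obtain from $\tilde\omega_l$. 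By contrast, your descent $\alpha = \tau^l p^*\eta$ for $\alpha$ in the $\zeta^l$-eigenspace only gives $\eta$ meromorphic with a possible simple pole along $\{t=0\}$ (coming from the $\tau^{l-1}\d\tau$ term), and no amount of "local analysis at $S\cap\{t=0\}$'' will make $\omega$ holomorphic out of that; the issue is simply that you descended the wrong piece. For property (3), the paper similarly replaces $\tilde\omega$ by $\tilde\omega_{k-l}$ in $\tilde T_j$, notes the resulting current section transforms under $Z$ by $\zeta^{-l}$, and uses the identification of $\mathfrak{G}_k$-invariant sections of $\underline{H}^n_{[\tilde S]}(\mathcal{O}_{\tilde X})$ with sections of $\underline{H}^n_{[S]}(\mathcal{O}_X)$; your pushforward idea is in the same spirit but again needs the corrected eigenindex.
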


\begin{proof}
After \cite{B:84a} and \cite{B:91}, there exist an integer $m \geq 0$ and a $(n-1)-$holomorphic form  
$\tilde{\omega}$ on $X$ verifying the following properties:
\begin{enumerate}[(i)]
\item $\displaystyle \d\tilde{\omega} = (m+u)\frac{\d\tilde{f}}{\tilde{f}}\wedge \tilde{\omega} $;
\item along $\tilde{S}$ the meromorphic $\delta_u-$closed form $\displaystyle \frac{\tilde{\omega}}{\tilde{f}^{m}}$  induces the section $W$;
\item the current  
$$\tilde{T}_j : =  \Res\big(\lambda = - m - u, \int_{\tilde{X}} \vert \tilde{f}\vert^{2 \lambda} \bar{\tilde{f}}^{m-j} \frac{\d\tilde{f}}{\tilde{f}} \wedge \tilde{\omega} \wedge \
\square \big)$$
satisfies $\d'\tilde T_j=\d'\tilde K_j$ for some current $\tilde K_j$ supported in the origin and 
$\tilde T_j - \tilde K_j$ is a $(n,0)-$current supported in  $\tilde S$ whose conjugate   induces the element  $r_j(W)$   of $H^n_{\tilde{S}}(\tilde{X}, \mathcal{O}_{\tilde{X}})$.
\end{enumerate}

\smallskip

On $\tilde X$ we have an action of the group $\mathfrak{G}_k$ of $k-$th roots of unity that is given by
$Z.(x, \tau) : = (x, \zeta\tau)$ where  $\zeta : = \exp(2i\pi/k)$. Then $X$  identifies to the complex smooth quotient of $\tilde X$ by this action. In particular every
 $\mathfrak{G}_k-$invariant  holomorphic  form on $\tilde X$ is the pullback of a holomorphic form on $X$. For the holomorphic form $\tilde \omega \in \Gamma(\tilde{X}, \Omega_{\tilde{X}}^{n-1})$ above, we may write 
$$ \tilde \omega = \sum_{l = 0}^{k-1}  \tilde{\omega}_\ell,\quad  \ \text{with} \quad  Z^*\tilde \omega _\ell = \zeta^\ell \tilde \omega _\ell. 
$$ 
Indeed, $\tilde \omega _\ell =  \frac{1}{k}\sum_{j,\ell = 0}^{k-1} \zeta^{-j\ell} (Z^j)^*\tilde{\omega }$
does the job. Because $\tau^{k-\ell}\tilde \omega _\ell$ is $\mathfrak{G}_k-$invariant, there exist holomorphic forms $\omega _0, \dots , \omega _{k-1}$ on $X$ such that
\begin{equation} \label{E:decomp}
\tilde\omega = \sum_{\ell=0}^{k-1} \tau^{\ell-k} p^*\omega _\ell.
\end{equation}
Put $\omega := \omega_{k-l}$. 
Because property (i) above is $Z-$invariant, each $\omega_\ell$ verifies it and hence $\omega$, whose pullback by $p$ is $\tau^{k-l}\tilde\omega_{k-l}$, will satisfy the first condition of the Theorem, after the injectivity of $p^*$.

The action of $Z$ on $\tilde \gamma$ is $Z\tilde \gamma=\zeta^{-l}\tilde \gamma$;
therefore $\tilde\omega_{k-l}$ verifies (ii) above and hence for $\ell\neq k-l$, the form $\tilde\omega_\ell$ induces $0$ in $\tilde{H}^{n-1}(u)$ along $\tilde{S}$.

Let us prove property (3) of the Theorem. When $\tilde\omega$ is replaced by $\tilde\omega_{k-l}$ in the definition of $\tilde T_j$, the section it defines on $\tilde{S}$ does not change. On the other hand, the action of $Z$ on this section is given by multiplication by $\zeta^{-l}$.
Because this section extends through $0$, the same is true for $\tau^{k-l}\tilde T_j$
whose conjugate will define a $\mathfrak{G}_k-$invariant section of 
$ \underline{H}^n_{[S]}(\mathcal{O}_{\tilde X})$ extendable through $0$. Condition (3) follows from the isomorphism of the subsheaf of $\mathfrak{G}_k-$invariant sections of 
$ \underline{H}^n_{[ \tilde S]}(\mathcal{O}_{\tilde X})$ and $ \underline{H}^n_{[S]}(\mathcal{O}_{X})$.
\end{proof}

\bigskip

Our next result treats the case where there is a  section  $ \tilde{W}$  of  $\tilde{H}^{n-1}(u)$  on  $\tilde{S}^*$  which is \emph{not extendable at the origin} and induces  $\tilde{\gamma}$  on  $\tilde{S}^*_i$.
Remark that there always exists a global section on  $\tilde{S}^*$  inducing  $\tilde{\gamma}$  on  $\tilde{S}^*_i$ : just put  $0$  on the branches  $\tilde{S}^*_{i'}$  for each  $i'\not= i$. The next theorem shows that is this case we obtain an oblique pole of 
$\int_X|f|^{2\lambda}|g|^{2\mu}\square$.

\bigskip

Remark that in any case we may apply the previous theorem or the next one. When  $S^*$  is not connected, it is possible that both apply, because it may exist at the same time a global section on  $\tilde{S}^*$  of the sheaf  $\tilde{H}^{n-1}(u)$  inducing  $\tilde{\gamma}$  on  $\tilde{S}^*_i$ which is extendable at the origin, and another one which is not extendable at the origin.
 
\begin{Thm}  \label{Thm:inter}
Under the hypotheses of Theorem $\ref{Thm:noninter}$, assume that we have a global  section  $\tilde{W}$   on  $\tilde{S}^*$  of the local system  $\tilde{H}^{n-1}(u)$  inducing  $\tilde{\gamma}$  on  $\tilde{S}^*_i$  which is not extendable at the origin. Then there exists $\Omega\in \Gamma(X, \Omega^n_X)$   and $l'\in[1,k] $  with the following properties:
\begin{enumerate}
\item  $\displaystyle \d\Omega = (m+u)\frac{\d f}{f} \wedge \Omega + \frac{l'}{k}\frac{\d t}{t} \wedge \Omega $;
\item along  $S^*$   the $n-$meromorphic $(\delta_u-\frac{l'}{k}\frac{dt}{t}\wedge)-$closed  form $\Omega/f^{m}$ induces  $\check{\tau}_1(\sigma) = \frac{\d f}{f}\wedge \sigma$ in the sheaf $h^n(\Gamma_{l'})$, for some global section  $\sigma$   on  $S^*$  of the sheaf  $h^{n-1}(\Gamma_{l'})$;
\item  the current on  $X$   of type $(n+1,0)$ with support  $\{0\}$ :
$$  P_2\big( \lambda = -m-u,Pf(\mu = -(k-l')/k, \int_X \ \vert f\vert^{2\lambda}\bar{f}^{m-j}
      \vert t \vert^{2\mu}\frac{\d f}{f}\wedge \Omega \wedge \square )\big) $$
      defines a non zero class in \ $H^{n+1}_{[0]}(X, \mathcal{O}_X)$ \ for  $j $  large enough
      in $\mathbb{N}$. 
\end{enumerate}
\end{Thm}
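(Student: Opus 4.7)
The plan is to mirror the proof of Theorem~\ref{Thm:noninter}: lift to the cyclic cover $p$, invoke the interaction-of-strata result of \cite{B:91} for $\tilde f$, and descend by $\mathfrak G_k$-isotypical projection. The new feature is that the non-extendability of $\tilde W$ produces a \emph{double} pole (rather than a simple-pole residue), so the target becomes a class in $H^{n+1}_{[0]}(X,\mathcal O_X)$ instead of a section of $\underline H^n_{[S]}(\mathcal O_X)$. Concretely, the non-extendability hypothesis on $\tilde W$ is exactly the interaction-of-strata hypothesis of \cite{B:91} applied to $\tilde f$; it produces an integer $m\ge 0$ and a holomorphic $n$-form $\tilde\Omega\in\Gamma(\tilde X,\Omega^n_{\tilde X})$ satisfying $d\tilde\Omega=(m+u)\tfrac{d\tilde f}{\tilde f}\wedge\tilde\Omega$, with $\tilde\Omega/\tilde f^m$ inducing $\check\tau_1(\tilde W)$ in $\tilde h^n$ along $\tilde S^*$, and such that the $(n+1,0)$-current
\[\tilde T_j:=P_2\!\Big(\lambda=-m-u,\int_{\tilde X}|\tilde f|^{2\lambda}\bar{\tilde f}^{\,m-j}\tfrac{d\tilde f}{\tilde f}\wedge\tilde\Omega\wedge\square\Big),\]
supported at $0\in\tilde X$, represents a non-zero class in $H^{n+1}_{[0]}(\tilde X,\mathcal O_{\tilde X})$ for $j$ sufficiently large.

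As in the proof of Theorem~\ref{Thm:noninter}, I decompose $\tilde\Omega=\sum_{\ell=0}^{k-1}\tilde\Omega_\ell$ under the $\mathfrak G_k=\langle Z\rangle$-action, with $Z^*\tilde\Omega_\ell=\zeta^\ell\tilde\Omega_\ell$ and $\tilde\Omega_\ell=\tau^{\ell-k}p^*\Omega_\ell$ for $\Omega_\ell\in\Gamma(X,\Omega^n_X)$. Because $Z\cdot\tilde\gamma=\zeta^{-l}\tilde\gamma$, I set $l':=l\in[1,k-1]\subset[1,k]$ and $\Omega:=\Omega_{k-l}$, so that $p^*\Omega=\tau^l\tilde\Omega_{k-l}$. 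Property~(1) is then obtained by applying $d$ to the identity $\tilde\Omega_{k-l}=\tau^{-l}p^*\Omega$ and using the equation $d\tilde\Omega_{k-l}=(m+u)\tfrac{d\tilde f}{\tilde f}\wedge\tilde\Omega_{k-l}$ (valid isotype by isotype, since $d$ and $\tfrac{d\tilde f}{\tilde f}\wedge$ are $\mathfrak G_k$-equivariant), combined with $\tfrac{d\tau}{\tau}=\tfrac{1}{k}p^*\tfrac{dt}{t}$ and the injectivity of $p^*$. Property~(2) follows from the fact that the components $\tilde\Omega_\ell$ with $\ell\neq k-l$ induce $0$ in $\tilde h^n$ along $\tilde S$ (they sit in $\mathfrak G_k$-isotypes distinct from that of $\check\tau_1(\tilde W)$), so that via the local identification of $\zeta^{-l}$-transforming multivalued sections on $S^*$ with sections of $h^n(\Gamma_{l'})$ (through a branch of $t^{l/k}$), the form $\Omega/f^m$ represents $\check\tau_1(\sigma)$ in $h^n(\Gamma_{l'})$.

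Property~(3) is the main obstacle. Pulling back the integral defining $T_j$ to $\tilde X$ using $p^*\Omega=\tau^l\tilde\Omega_{k-l}$, $p^*|t|^{2\mu}=|\tau|^{2k\mu}$, and the factor $1/k$ from the $k$-sheeted cover, I rewrite it as
\[\tfrac{1}{k}\int_{\tilde X}|\tilde f|^{2\lambda}\bar{\tilde f}^{\,m-j}|\tau|^{2k\mu}\tau^{l}\tfrac{d\tilde f}{\tilde f}\wedge\tilde\Omega_{k-l}\wedge p^*\square.\]
At $\mu_0=-(k-l')/k\in(-1,0)$ the factor $|t|^{2\mu}$ is regular, so $Pf$ reduces to evaluation and produces a multiplier $\tau^{2l-k}\bar\tau^{\,l-k}$; integration against $p^*\phi$ followed by $\mathfrak G_k$-averaging then selects precisely the $\zeta^{-l}$-isotypical component $\tilde T_{j,k-l}$ of $\tilde T_j$. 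Since $\tilde W$ is the only eigenvector carrying the non-extendability, the double-pole content of $\tilde T_j$ is concentrated entirely in this isotype (the other isotypes at most contribute simple-pole residues, which vanish after $P_2$), so $\tau^l\tilde T_{j,k-l}$ represents a non-zero $\mathfrak G_k$-invariant class in $H^{n+1}_{[0]}(\tilde X,\mathcal O_{\tilde X})$. Via the identification of $\mathfrak G_k$-invariant sections of $H^{n+1}_{[0]}(\tilde X,\mathcal O_{\tilde X})$ with sections of $H^{n+1}_{[0]}(X,\mathcal O_X)$ used at the end of the proof of Theorem~\ref{Thm:noninter}, this descends to the required non-zero class of $T_j$. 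The delicate point is precisely this last step: one must verify that the composite operation $P_2\circ Pf$ on $X$ interacts correctly with the $\mathfrak G_k$-averaging, so as to recover (and not cancel) the full double-pole contribution residing in the $\zeta^{-l}$-isotype, and that the non-extendability hypothesis on $\tilde W$ indeed concentrates the non-vanishing of $\tilde T_j$ in that single isotype.
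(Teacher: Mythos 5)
Your argument diverges from the paper's in a way that creates a genuine gap: you fix $l':=l$ from the start, reasoning from $Z\cdot\tilde\gamma=\zeta^{-l}\tilde\gamma$, and you assume that the form $\tilde\Omega$ supplied by \cite{B:91} induces precisely $\check\tau_1(\tilde W)$. Neither step is justified. Proposition 10 and Theorem 13 of \cite{B:91}, applied to $\tilde f$ once non-extendability gives $H^1_0(\tilde S,\tilde H^{n-1}(u))\neq 0$, produce a holomorphic $n$-form $\tilde\Omega$ with $\tildeob_1([\tilde\Omega])\neq 0$ — i.e.\ $\tilde\Omega/\tilde f^{m}$ induces \emph{some} non-extendable section — together with a $\mathfrak G_k$-equivariance character $Z^*\tilde\Omega=\zeta^{-l'}\tilde\Omega$ for \emph{some} $l'\in[1,k]$. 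Nothing forces that non-extendable section to be $\tilde W$, nor forces $l'=l$: the obstruction may well live in a different isotype of $H^0(\tilde S^*,\tilde H^{n-1}(u))$, and the paper deliberately introduces a \emph{new} class $\gamma'$ (defined from $\tildeob_1([\tilde\Omega])$) rather than reusing $\gamma$. This is exactly why the theorem quantifies existentially over $l'$ and over $\sigma$. Your fixed choice $l'=l\in[1,k-1]$ also rules out the case $l'=k$ a priori, which the paper treats in a separate remark and which can actually occur.

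Your verification of property (3) is also not a proof. The sentence claiming that ``the double-pole content of $\tilde T_j$ is concentrated entirely in this isotype'' because ``$\tilde W$ is the only eigenvector carrying the non-extendability'' has no justification — $\tilde W$ is an arbitrary global section inducing $\tilde\gamma$ on one branch, and several isotypical components of $H^1_0(\tilde S,\tilde H^{n-1}(u))$ may be nonzero. The paper's route is to observe that $P_2\bigl(\lambda=-m-u,\int_{\tilde X}|\tilde f|^{2\lambda}\bar{\tilde f}^{\,-j}\frac{d\tilde f}{\tilde f}\wedge\tilde\Omega_{k-l'}\wedge\square\bigr)$ is, by \cite{B:91}, a nonzero analytic functional supported at $0\in\tilde X$; one then picks a test form $\tilde w$ with nonzero pairing, uses the change of variable $\tau\mapsto\zeta\tau$ to replace $\tilde w$ by its $\zeta^{-l'}$-isotype component $\tilde w_{k-l'}$ while preserving the nonvanishing, and finally descends through $p$. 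Your pullback-plus-$\mathfrak G_k$-averaging sketch never establishes this nonvanishing.
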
 

\bigskip

\begin{Rem}
As a consequence, with the aid of Corollary \ref{Cor:oblic2}, we get an oblique pole of  $\int_X|f|^{2\lambda}|g|^{2\mu}\square$ through $(-m-u-j, -l'/k)$  for  $j \gg 1$, provided
$\int_X |f|^{2\lambda}\square$ does not have double pole at $-u-m$, for all $m\in \NN$.
\end{Rem}

\bigskip

\begin{proof}
As our assumption implies that  $H^1_0(\tilde S, \tilde  H^{n-1}(u))\neq 0$,
Proposition 10 and Theorem 13  of \cite{B:91} imply the existence of $\tilde\Omega\in \Gamma(\tilde X, \Omega^n_{\tilde X})$ verifying
\begin{enumerate}[(i)]
\item  $\displaystyle \d\tilde \Omega = (m+u)\frac{\d\tilde f}{f} \wedge \tilde\Omega  $, for some $m\in \NN$;
\item $\tilde{ob}_1([\tilde\Omega]) \neq 0$, that is  $\tilde{\Omega}/\tilde{f}^m$  induces, via the isomorphisms (\ref{E:isoms}),  an element in  $H^0(\tilde{S}^*, \tilde{H}^{n-1}(u))$  which is not extendable at the origin;
\item $Z\tilde\Omega=\zeta^{-l'}\Omega$, for some $l'\in[1,k]$ and $\zeta:=\exp(-2i\pi/k)$.
\end{enumerate}

\smallskip

Define then $\gamma'\in\H_0$ by the following condition: $(\pi_*)^{-1}\gamma'$ is the value at $\tau_0$ of  $\tildeob_1([\tilde\Omega])$. After condition (iii) we have
$\Theta_0(\gamma')=\zeta^{-l'}\gamma'$.

As we did in (\ref{E:decomp}), we may write
$$  \tilde\Omega = \sum_{\ell=0}^{k-1} \tau^{\ell-k} p^*\Omega _\ell.
$$
Put $\Omega:=\Omega_{k-l'}$. Because $p^*\Omega =\tau^{k-l'}\tilde\Omega_{k-l'}$ and $\tilde\Omega_\ell$ satisties
$\d\tilde \Omega _\ell= (m+u)\frac{\d\tilde f}{f} \wedge \tilde\Omega_\ell  $ for any $\ell$, property (1) of the Theorem is satisfied thanks to  injectivity of $p^*$.

Relation (iii) implies that $\tilde\Omega_{k-l'}$ induces $\tilde\gamma':=(\pi_*)^{-1}\gamma'$
and $\tilde\Omega_\ell$ induces 0 for $\ell\neq k-l'$. Hence condition (2) of the Theorem is satisfied.

In order to check condition (3), observe that the image of $r_j(\tilde\gamma')$  in $H^{n+1}_{[0]}(\tilde X, \Omega^{n+1}_{\tilde X})$ is equal to the conjugate of
$$ \d'\Res(\lambda=-m-u, \int_{\tilde X}|\tilde f|^{2\lambda}\bar{\tilde f}^{-j}\tilde \Omega_{k-l'}	\wedge\square)  = 	P_2(\lambda=-m-u, \int_{\tilde X}|\tilde f|^{2\lambda}\bar{\tilde f}^{-j} \frac{\d \tilde f}{\tilde f}    	\wedge\tilde\Omega_{k-l'}	\wedge\square ).
$$
After \cite{B:91}, this current is an analytic nonzero functional supported in the origin in $\tilde X$. There exists therefore $\tilde w\in\Gamma(\tilde X, \tilde\Omega^{n+1}_{\tilde X})$ such that
$$ P_2(\lambda=-m-u, \int_{\tilde X}|\tilde f|^{2\lambda}\bar{\tilde f}^{-j} \frac{\d\tilde f}{\tilde f}    	\wedge\tilde\Omega_{k-l'}	\wedge\chi \bar{\tilde w}) \neq 0,
$$
for any cutoff $\chi$ equal to 1 near 0. The change of variable $\tau\mapsto\zeta\tau$ shows that $\tilde w$ may be replaced by its component $\tilde w_{k-l'}$ in the above relation. With $w\in
\Gamma (X, \Omega^{n+1}_{ X})$ such that $p^*w= \tau^{k-l'}\tilde w_{k-l'}$ we get
$$ P_2(\lambda=-m-u, \int_{ X}| f|^{2\lambda}\bar{ f}^{-j}|t|^{-2(k-l')/k} \frac{\d f}{ f}    	\wedge\Omega_{k-l'}	\wedge\chi \bar{ w}) \neq 0.
$$
\end{proof}

\bigskip

\begin{Rem}
The case $l'=k$ is excluded if $\int_X | f|^{2\lambda}\square$ has only simple poles at
$-m-u$  for all  $m \in \mathbb{N}$. Indeed, if $l'=k$, the class $[\Omega]$ in $H^n(u)$ satisfies $\ob_1([\Omega])\neq 0$; from Theorem 13 of \cite{B:91}  interaction of strata is present and gives rise to  poles of order  $\geq 2$.
\end{Rem}

\bigskip

%%%%%%%%%%%%%%%%%%%%%%%%%%%%%%%%%%%%%%%%%%%%%
\section{Examples}

\begin{Ex} \label{Ex:premier}
$n=2$, $f(x,y,t)=tx^2-y^3$.
The extension of  $\int_X|f|^{2\lambda}|t|^{2\mu}\square$ presents an oblique polar line of direction $(3,1)$ through $(-5/6-j, -1/2)$, for $j\gg 1$. In fact it follows from general facts that $j=2$ is large enough because here $X$ is a neighborhood of $0$ in $\CC^3$.
\end{Ex}
\begin{proof}
We verify directly that the standard generator of $H^1(5/6)$ (which is a local system of rank 1)
on $S^*:= \ens{ x=y=0} \cap \ens{t\ne 0}$ has monodromy $-1 = \exp(2i\pi 1/2 )$.
We take therefore $k=2$ and we have $\tilde f(x,y,\tau):= \tau^2 x^2-y^3$.

Put 
$$ \tilde S^*=\tilde S_1^* \cup \tilde S_2^* \ \text{with} \ 
     \tilde S_1^*:=  \ens{ x=y=0} \cap \ens{\tau\ne 0}, \ 
     \tilde S_2^*:=  \ens{ \tau=y=0} \cap \ens{x\ne 0}.
$$     

The form $\tilde \omega :=  3x\tau \, dy -2y \d(x\tau) $ verifies
\begin{equation} \label{E:domega}
          \d\tilde\omega =\frac{5}{6}\frac{\d \tilde f}{\tilde f}\wedge\tilde\omega
\end{equation}
and $\tilde\omega$ induces a nonzero element in the $H^1$ of the Milnor fibre of $\tilde f$ at $0$
because it induces on $\tilde S_1^*$ the pullback of the multivalued section of $H^1(5/6)$ we started with. It follows that the form $\omega$ of Theorem \ref{Thm:noninter} is
$$ \omega = 3xt\,dy-2yt\,dx -xy \,dt.
$$
It verifies $p^*\omega = \tau \tilde \omega$ and hence
$$ \d \omega = \frac{5}{6}\frac{\d f}{f}\wedge \omega + \frac{1}{2}\frac{d t}{t}\wedge \omega.
$$
One way to see interaction of strata for $\tilde f$ and $\exp(2i\pi 5/6)$ consists in looking at the form 
$\tilde\Omega:=\frac{d\tau}{\tau}\wedge\tilde\omega = d\tau\wedge(3x\,dy-2y\,dx)$ that verifies
$\d\tilde\Omega =  \frac{5}{6}\frac{\d\tilde f}{\tilde f}\wedge \tilde \Omega$.
Along $\tilde S_1^*$ we have 
$$ \tilde \Omega = \d(\tilde\omega\log\tau)-\frac{5}{6}\frac{\d\tilde f}{\tilde f}\wedge \tilde \omega\log      \tau,
$$
after (\ref{E:domega}).
Hence 
$\ob_1(\tilde \Omega)\ne 0$ in $ H^1(S_1^*,\tilde H^1(5/6))$. Interaction of strata is proved.

\smallskip
It turns out that Theorem \ref{Thm:inter} may also be used to see existence of an oblique pole as follows. Construct a section on
$\tilde S^*$ of $\tilde H^1(5/6)$ that does not extend through $0$ by setting $0$ on $\tilde S_2^*$
and the restricition of $\tilde \omega$ to $\tilde S_1^*$. This section does not extend because  otherwise its value at the origin  should be  not $0$ in $H^1$ of $\tilde f$ (because not $0$ along
$\tilde S_1^*$) on one hand and should be $0$, because of its value on $\tilde S_2^*$, on the other hand.

Notice also that the meromorphic extension of $\int_X |f|^{2\lambda}\square$ does not have a double pole at $-5/6-j$, for all $j\in\NN$, because interaction of strata is not present for $\exp(-2i\pi 5/6)$: the monodromies for
$H^1$ and $H^2$ of the Milnor fibre of $f$ do not have the eigenvalue $\exp(-2i\pi 5/6)$ because they are of order 3 thanks to homogeneity.
\end{proof}

\begin{Ex}
$n=2$, $f(x,y,t)=x^4+y^4+tx^2y$.
The extension of  $\int_X|f|^{2\lambda}|t|^{2\mu}\square$ presents an oblique polar line of direction $(4,1)$ through $(-5/8, -1/2)$.
\end{Ex}

\begin{proof}
The Jacobian ideal of $f$ relative to $t$, denoted by  $J_/(f)$, is generated by
\begin{equation*}  
\frac{\partial f}{\partial x} = 4x^3 + 2txy \ \ \text{and}  \ \ \frac{\partial f}{\partial y} = 4y^3 + tx^2.
\end{equation*}
We have
\begin{equation}  \label{E:2}
t\frac{\partial f}{\partial x} - 4x\frac{\partial f}{\partial y} = 2(t^2 - 8y^2)xy .
\end{equation}
Put $\delta : = t^2 - 8y^2$ and notice that for  $t \not=0$ the function $\delta$  is invertible
at $(t, 0, 0)$.
We use notations and results of \cite{B:08}. Recall that
$\mathbb{E} : = \Omega^2_/\big/ \d_/f\wedge \d_/\mathcal{O}$
is equipped with two operations $a$ and $b$ defined by $a\xi =\xi f$, $b(\d_/\xi) : = \d_/f\wedge \xi$ and a $t-$connection $b^{-1}.\nabla : \mathbb{P} \to \mathbb{E}$ that commutes to $a$ and $b$ where
\begin{enumerate}
\item \ $\nabla : \mathbb{E} \to \mathbb{E} $  is given by $\nabla(\d_/\xi) : = \d_/f\wedge \frac{\partial \xi}{\partial t} - \frac{\partial f}{\partial t}d\xi $,
\item  \ $\mathbb{P} : = \{ \alpha \in \mathbb{E} \mid \nabla(\alpha) \in b\mathbb{E}\}$.
\end{enumerate}
Relation (\ref{E:2}) gives
\begin{equation} \label{E:3}
2xy\delta = \d_/f \wedge (t\,dy + 4x\,dx) = \d_/f \wedge \d_/(ty + 2x^2);
\end{equation}
hence $xy\delta =0 \in \mathbb E$, and $xy \in J_/(f)$ for $t\ne 0$. As a consequence,
for $t \not= 0 $ fixed,  $x^3$ and $y^4$ belong to  $J(f_t)$. Therefore the $ (a,b)-$module
$\mathbb{E}_{t_0} : = \mathbb{E}/(t-t_0)\mathbb{E}$ has rank  $5$ over $\mathbb{C}[[b]]$.
The elements  $1, x, y, x^2, y^2 $ form a basis of this module.

We compute now the structure of $(a,b)-$module of $\mathbb E$ over the open set
$ \{t \not= 0\}$, \emph{i.e.}, compute the action of $a$ on the basis. Let us start with
$$ a(y^2) = x^4y^2 + y^6 + x^2y^3 .$$
Relation (\ref{E:2}) yields
\begin{equation} \label{E:4}
2x^2y\delta = \d_/f \wedge (tx\,dy + 4x^2\,dx) = t.b(1)
\end{equation}
and also
 \begin{equation} \label{E:5}
  x^2y   = b(\d(\frac{tx\,dy + 4x^2\,dx}{2\delta})) =  \frac{1}{2t}b(1) + \frac{4}{t^3}b(y^2) + b^2.\mathbb{E} .
 \end{equation}
 From
 $$b(1) = \d_/f \wedge (x\,dy) = 4x^4 + 2tx^2y = \d_/f \wedge (-y\,dx) = 4y^4 + tx^2y
 $$
 we get
 \begin{equation}\label{E:6}
 4x^4 = -\frac{8}{t^2}b(y^2) + b^2\mathbb{E} .
 \end{equation}
 Therefore
 \begin{equation} \label{E:7}
 4y^4 = b(1) - tx^2y = \frac{1}{2}b(1) - \frac{4}{t^2}b(y^2) + b^2\mathbb{E} .
 \end{equation}
 The relation
  $$ x^4y^2 = \d_/f \wedge \frac{x^3y\,dy + 4x^4y\,dx}{2\delta}$$
  deduced from (\ref{E:3}) shows $x^4y^2 \in b^2 \mathbb{E}$.

  Relation (\ref{E:4}) rewritten as  $2t^2x^2y = tb(1) + 16x^2y^3$ yields
  \begin{equation*} 
  x^2y^3 = \frac{t^2}{8}x^2y - \frac{t}{16}b(1).
  \end{equation*}
Moreover
  \begin{equation*} 
  y^3(4y^3 + t x^2) = y^3\frac{\partial f}{\partial y} = \d_/f \wedge (-y^3\,dx) = 3b(y^2)
  \end{equation*}
  and hence
  \begin{equation*} 
  4y^6 = -tx^2y^3 + 3b(y^2).
  \end{equation*}
 On the other hand
  \begin{equation*} 
  b(y^2) = \d_/f \wedge (xy^2\,dy) = 4x^4y^2 + 2tx^2y^3 = 2tx^2y^3 + b^2\mathbb{E}.
  \end{equation*}
Finally
\begin{equation*}  
	a(y^2)  =   x^4y^2 + y^6 + tx^2y^3 =
                    -\frac{t}{4}x^2y^3 + \frac{3}{4}b(y^2) + tx^2y^3 +  b^2\mathbb{E}
    		 =   \frac{9}{8}b(y^2) +  b^2\mathbb{E}.
\end{equation*}

Now, after (\ref{E:6}), (\ref{E:7}) and (\ref{E:5}) we obtain successively
$$ \begin{aligned}
    a(1) &= x^4 + y^4 + tx^2y = -\frac{2}{t^2}b(y^2) + \frac{1}{8}b(1) - \frac{1}{t^2}b(y^2) +  	\frac{1}{2}b(1) + \frac{4}{t^2}b(y^2) +  b^2\mathbb{E} \\
           &= \frac{5}{8}b(1) + \frac{1}{t^2}b(y^2) + b^2\mathbb{E}, \\
        a(1 - \frac{2y^2}{t^2}) &= \frac{5}{8}b(1) + \frac{1}{t^2}b(y^2) - \frac{2}{t^2} 	\frac{9}{8}b(y^2) + b^2\mathbb{E} =\frac{5}{8}b(1 - \frac{2y^2}{t^2}) + b^2\mathbb{E} .
	 \end{aligned}
$$
Some more computations of the same type left to the reader give

$$   \begin{aligned}
    a(x) &= b(x) + b^2\mathbb{E} ,\\
    a(y) &= \frac{7}{8}b(y) + b^2\mathbb{E} , \\
    a(x^2) &= \frac{11}{8}b(x^2) + b^2\mathbb{E} .
	\end{aligned}
$$

   \bigskip

Let us compute the monodromy $M$ of $t$ on the eigenvector $v_0:=1 - \frac{2y^2}{t^2} +b\mathbb{E} $. Because   $b^{-1}\nabla = \frac{\partial}{\partial t}$ it is given by
$$ M =  \exp(2i\pi tb^{-1}\nabla).
$$
We have
$$
   \nabla(1) = - x^2y   = -b(\frac{1}{2t}1 + \frac{4y^2}{t^3}) + b^2\mathbb{E}
$$
and hence
$$
   t\frac{\partial}{\partial t}(1) = \frac{-1}{2}1 - \frac{4y^2}{t^2} + b\mathbb{E}.
$$
Also
$$
    \nabla(y^2) = -x^2y^3 = -\frac{t^2}{8}x^2y + \frac{t}{16}b(1) \\
    	  =  -\frac{t^2}{8}\big(\frac{1}{2t}b(1) + \frac{4}{t^3}b(y^2) \big) + \frac{t}{16}b(1) + b^2\mathbb{E}
$$
gives
$$
    t\frac{\partial}{\partial t}(y^2) = -\frac{1}{2}y^2 +  b\mathbb{E} .
$$
Hence
$$  t\frac{\partial}{\partial t}(1 - \frac{2y^2}{t^2}) =
    -\frac{1}{2}(1 - \frac{2y^2}{t^2}) + b\mathbb{E}
$$
from what we deduce $Mv =-v$.

         \bigskip
An analogous computation with  $\tau^2 = t$, shows that the eigenvector $\tilde{v}$ is invariant under $\tilde{M}$. On the other hand, the relation
$$ \tilde{v} = 1 - \frac{2y^2}{\tau^4} + b\tilde{\mathbb{E}} $$
where $\tilde{\mathbb{E}} $ is associated to the pair $(\tilde{f},\tau)$, with
$ \tilde{f}(x,y,\tau) : = x^4 + y^4 + \tau^2x^2y $,
shows that $\tilde{v}$ does not extend through $0$ as a section of  $\tilde{\mathbb{E}}$.

    \bigskip

This last assertion may be proved directly. It suffices to show that there does not exist a holomorphic non-trival\footnote{that is, not inducing $0$ in the Milnor fibre of $\tilde f$ at $0$}
$1-$form $\tilde \omega$ near $0$  such that
\begin{equation} \label{E:13}
   \d\tilde{\omega} = \frac{5}{8}\frac{\d\tilde{f}}{\tilde{f}}\wedge \tilde{\omega}.
\end{equation}
Because $\tilde f$ is quasi-homogeneous  of degree $8$ with the weights $(2,2,1)$ and
because $\tilde \omega/\tilde f^{5/8}$ is homogeneous of degree $0$, the form
$\tilde\omega$ must be homogeneous of degree $5$. So we may write
$$\tilde{\omega} = (\alpha_0 + \alpha_1\tau^2 + \alpha_2\tau^4)d\tau + \tau\beta_0 + \tau^3\beta_1
$$
where $\alpha_i$ and $\beta_i$ are respectively $0-$ and $1-$homogeneous forms of degree $2 -i$ with respect to $x,y$. Setting $\beta : = \beta_0 + \tau^2\beta_1$,
we get
$$
     \d\tilde{f} \wedge \tilde{\omega} = \d_/\tilde{f}\wedge \tau\beta  \quad \text{and} \quad
    \d\tilde{\omega} = \tau d_/\beta \quad  \text{modulo} \quad d\tau\wedge \square.
$$
With (\ref{E:13}) we deduce
$$ 8\tilde{f}\d_/\beta = 5\d_/\tilde{f} \wedge \beta
$$
 and an easy computation shows that this can hold only if $\beta = 0$. In that case $\alpha = 0$ also and the assertion follows.
\end{proof}

%%%%%%%%%%%%%%%%%%%%%%%%%%%%%%%%%%%%%%%%%%%%%%%%
\bibliographystyle{amsplain}
\bibliography{BarMai09}

\end{document}